\numberwithin{equation}{section}
\newtheorem{lemma}{Lemma}[section]
\newtheorem{proposition}[lemma]{Proposition}
\newtheorem{theorem}[lemma]{Theorem}
\newtheorem{remark}[lemma]{Remark}
\DeclareMathOperator{\cov}{cov}
\renewenvironment{proof}{{\em Proof.}}{\hspace*{\fill} $\square$}
\newenvironment{proofof}[1]{{\em Proof of #1.}}{\hspace*{\fill} $\square$}
\title{\textsc{Critical Gaussian chaos: \\ convergence and uniqueness in the derivative normalisation} } 
\author{ Ellen Powell \thanks{ellen.g.powell@durham.ac.uk, supported by NCCR Swissmap and EPSRC grant EP/H023348/1} \\ \textit{Durham University} }
\date{}
 \newcommand{\R}{\mathbb{R}}
\newcommand{\C}{\mathbb{C}}
\newcommand{\I}{\mathds{1}}
\newcommand{\E}[1]{\mathbb{E}\left [ #1 \right ]}
\newcommand{\Os}{\mathcal{O}}
\newcommand{\F}{\mathcal F}
\newcommand{\e}{\operatorname{e}}
\newcommand{\eps}{\varepsilon}
\newcommand{\var}[1]{\text{var}(#1)}
\newcommand{\qbeh}[1]{\hat{\mathbb{Q}}^{\beta,\eps}\left[ #1 \right]}
\newcommand{\ebex}[1]{\mathbf{\tilde{Q}}_x^{\beta,\eps}\left[#1\right]}
\newcommand{\qbex}[1]{\mathbf{Q}_x^{\beta,\eps}\left[#1\right]}
\newcommand{\he}{h_\eps}
\newcommand{\tihe}{\tilde{h}_\eps}
\newcommand{\lex}{\lambda_\eps}
\begin{document}


\maketitle

\begin{abstract}
{We show that, for general convolution approximations to a large class of log-correlated fields, including the 2d Gaussian free field, the critical chaos measures with derivative normalisation converge to a limiting measure $\mu'$. This limiting measure does not depend on the choice of approximation. Moreover, it is equal to the measure obtained using the Seneta--Heyde renormalisation at criticality, or using a white-noise approximation to the field. }
\end{abstract}

\section{Introduction} The theory of Gaussian multiplicative chaos was developed by Kahane, \cite{kahane}, in order to rigorously define measures of the form
\[\mu^\gamma (dx):= \e^{\gamma h(x)-\frac{\gamma^2}{2}\mathbb{E}[h(x)^2]} \, dx\] 
where $h$ is a rough centered Gaussian field, satisfying certain assumptions, and $\gamma>0$ is a real parameter. Since $h$ is not defined pointwise, a regularisation procedure is required to define $\mu^\gamma$. In \cite{kahane}, it is assumed that the covariance kernel $K$ of $h$ is $\sigma$-positive, meaning that $K$ can be approximated by a series of smooth positive kernels $K_n$. It is then possible to associate to such an approximation the sequence of measures $\mu_n(dx):=\exp\{\gamma h_n(x)-(\gamma^2/2) \var{h_n(x)}\} dx$. Kahane proved that these measures converge as $n\to \infty$, and that the limit is independent of the choice of approximation. We call this limit the $\gamma$-chaos measure associated to $h$.

However, $\sigma$-positivity can be hard to check pointwise, and in recent years this theory has been significantly generalised by several authors \cite{RV,Ber,JS,Shamov}. When $K$ is not $\sigma$-positive, a natural way to approximate $h$ is to convolve it with a general mollifier function $\theta$. Writing $h_\eps$ for these regularisations, it has been shown that for log-correlated $h$, and under very general conditions on $\theta$, the approximate measures \begin{equation}
\label{eqn::mueps}
\mu_\eps^\gamma(dx):=\e^{\gamma h_\eps(x)}\e^{-\frac{\gamma^2}{2}\var{h_\eps(x)}} dx
\end{equation} converge weakly in law \cite{RV} and in probability \cite{Ber, Shamov} as $\eps \to 0$. The limit is non-zero if and only if $\gamma^2<2d$. Moreover, it is universal in that it does not depend on the choice of regularisation \cite{Ber, JS, Shamov}.

When $\gamma^2=2d$, an additional renormalisation is required in order to yield a non-trivial limiting measure. Motivated by the theory of multiplicative cascades and the branching random walk \cite{BK, AiSh} one can hope to renormalise at criticality in one of two different ways. The first is called the Seneta--Heyde renormalisation, and involves premultiplying the sequence of measures (\ref{eqn::mueps}) by the deterministic sequence $\sqrt{\log(1/\eps)}$. The other is a random renormalisation, which is defined by taking a derivative of the measure (\ref{eqn::mueps}) in $\gamma$. It has been shown in \cite{DSRV,DSRV2} that for a special class of fields $h$ having so-called $\star$-scale invariant kernels, and for a specific sequence of approximations to $h$, both procedures yield the same non-zero limiting measure (up to a constant). However, the result in these papers relies heavily on the cut-off approximation used for the kernel of $h$, and does not generalise to arbitrary convolution approximations. These are somewhat more natural, local approximations to the field, and the goal of the paper will be to extend the theory to this set-up.

In this paper we will be particularly, but not exclusively, interested in the specific case where the underlying field $h$ is a 2d Gaussian free field with zero-boundary conditions. In this case the measure $\mu^\gamma$ (when it is defined and non-zero) is known as the Liouville measure with parameter $\gamma$. This has been an object of considerable recent interest due to its strong connection with 2d Liouville quantum gravity and the KPZ relations \cite{DS, RhodesV,Ber2}. Recent works in the case $\gamma<2$ include \cite{DS, RhodesV, Ber}, which among other things make an in-depth study of its moments, multifractal structure, and universality. Recently, in \cite{uslqg}, it has also been shown that these measures can be approximated using so-called local sets of the Gaussian free field. This is a particularly natural construction because it is both local and conformally invariant. 

The critical case $\gamma=2$ has also been considered for the Gaussian free field: \cite{DSRV2,HRVdisk, JS, uslqg}. In \cite{DSRV2}, the authors generalised their construction for $\star$-scale invariant kernels to show convergence in the Seneta--Heyde and derivative renormalisations for a specific ``white noise'' approximation to the field. These both yield the same (up to a constant) non-trivial limiting measure $\mu'$, that we will call the critical Liouville measure. However, this proof again does not extend to convolution approximations. 

The purpose of this article is to complete the picture for convolution approximations to critical chaos. We will focus specifically on the case of the 2d GFF, and fields with $\star$-scale invariant kernels (in any number of space dimensions). This builds on recent work of Junnila and Saksman \cite{JS} (and also \cite{HRVdisk} in the case of the free field), who show that in either of the cases above, the critical measure can be constructed using convolution approximations in the Seneta--Heyde renormalisation. 

To complete the story, therefore, it remains to show that the random ``derivative'' renormalisation procedure will also yield the same limit for general convolution approximations. This is the main result of the current paper. We remark that the derivative renormalisation is somewhat more natural, and in fact, it is usually easier to show convergence of this before convergence in the Seneta--Heyde renormalisation (which is then obtained by a comparison argument). Here we will reverse this procedure. 

Suppose that $h$ is a log-correlated field in a bounded subset $D\subset \R^d$ with kernel $K(x,y)$. By this we mean that $(h,\rho)_{\rho\in \mathcal{M}}$ is a centered Gaussian process, indexed by the set of signed measures $\rho$ whose positive and negative parts $\rho^{\pm}$ satisfy $\iint \rho^{\pm}(dx)|K(x,y)|\rho^{\pm}(dy)<\infty$, with covariance structure 
\[ \cov((h,\rho)(h,\rho'))=\iint \rho(dx)K(x,y)\rho'(dy) \]
for $\rho,\rho'\in \mathcal{M}$. Also suppose that  $\theta$ is a positive measure such that
\begin{equation}\label{eqn::cond_theta} \theta\in \mathcal{M}, \, \text{supp}(\theta)\subseteq \overline{B(0,1)}, \, \int_{\overline{B(0,1)}} \theta(dx) =1 \text{ and } \int \frac{1}{{|u-v|}} \, \theta(du)=\text{O}(1)\end{equation}
uniformly over $v\in B(0,5)$. Then we define a sequence of $\theta$-mollified approximations to $h$ by setting for $\eps>0$, \begin{equation}
\label{eqn::conv_def}
h_\eps:=h\star \theta_{\eps}(x)=(h,\theta_{\eps,x}),\end{equation} where $\theta_{\eps}$ is the image of $\theta$ under the map $y\mapsto \eps y$ and $\theta_{\eps,x}$ is the image of $\theta_{\eps}$ under the map $y\mapsto y+x$. We define the measures $M_\eps$ and $D_\eps$ associated with this approximation by setting 
\begin{align*} M_\eps(\Os) & :=\int_{\Os} \e^{\sqrt{2d}h_{\eps}(x)-d\var{h_\eps(x)}} dx; \\
D_\eps(\Os) & :=\int_{\Os} (-h_\eps(x)+\sqrt{2d}\var{h_\eps(x)})  e^{\sqrt{2d}h_{\eps}(x)-d \var{h_\eps(x)}} dx\;\; (\Os\subset D).  \end{align*} Note that $M_\eps$ is exactly the same as $\mu_\eps^{\sqrt{2d}}$ (but we introduce the new notation to distinguish the special case $\gamma=\sqrt{2d}$ and avoid confusing notation when $d=2$). Our aim will be to prove the following.
\begin{theorem} \label{proposition::convergence}
	Suppose that $h$ is a 2d Gaussian free field in $D\subset \R^2$ bounded and simply connected. Suppose that $D_\eps$ is defined as above, for a mollifier $\theta$ satisfying (\ref{eqn::cond_theta}). Then $D_\eps$ converges weakly in probability as $\eps\to 0$ to the critical Liouville measure $\mu'$ constructed in \cite{DSRV2}. In particular $\lim_\eps D_\eps$ does not depend on $\theta$. 
\end{theorem}

\begin{theorem} \label{theorem::convergencestar}
	Suppose that $h$ is a Gaussian field in $\R^d$ ($d\ge 1$) with $\star$-scale invariant kernel and $D_\eps$ is defined as above, for a mollifier $\theta$ satisfying (\ref{eqn::cond_theta}) and with H\"{o}lder continuous density. Then $D_\eps$ converges weakly in probability as $\eps\to 0$ to a limiting measure. This measure is independent of the choice of approximation, and agrees with the critical measure constructed in \cite{DSRV,JS} (see Theorems \ref{theorem::critical_star} and \ref{theorem::critical_sh}).  
\end{theorem}

There is one further motivation for proving Theorem \ref{proposition::convergence}. In \cite{uslqg}, the authors construct a critical measure for the Gaussian free field, using a simple and natural approximation based on its local sets. This is closely related to the classical construction of multiplicative cascades \cite{kp}, and we believe that this connection can be exploited to help us improve our understanding of the situation at criticality (in particular, to prove a conjecture given in \cite{DSRV}.) However, it is a priori hard to connect the measure of \cite{uslqg} to the measure $\mu'$ of \cite{DSRV2}. It turns out that Theorem \ref{proposition::convergence} is exactly what is needed to show that they are in fact equal (for details of this argument, see \cite{uslqg}). In conclusion, Theorem \ref{proposition::convergence} gives us a universality statement for critical Liouville quantum gravity, that is now in line with the statement for the subcritical case \cite{Ber,Shamov,uslqg}.

\bigbreak
\noindent \textbf{Outline} We will begin in Section \ref{sec::prelims} by giving a brief introduction to log-correlated fields, and explaining how to approximate them using general mollifiers. We will also discuss here some of the existing literature concerning subcritical and critical Gaussian multiplicative chaos, and recall some basic facts about the 3-dimensional Bessel process. These occur naturally in critical Gaussian multiplicative chaos; roughly, as the value of the field locally about a typical point, and will be instrumental in the proof of Theorems \ref{proposition::convergence} and \ref{theorem::convergencestar}. In Section \ref{sec::freefield} we concentrate on the case when $h$ is a 2d Gaussian free field, and prove Theorem \ref{proposition::convergence}. We begin in Section \ref{sec::ui} by showing that certain families of ``cut-off'' approximations to the derivative measures (that we shall call $D_\eps^\beta$) are uniformly integrable. In fact, this will not be used directly in the proof of Theorem \ref{proposition::convergence}, but is needed for the aforementioned application to \cite{uslqg}, and introduces technical facts required for the rest of the proof. Section \ref{sec::convergence} contains the bulk of the proof. The main idea is to connect the derivative measures $D_\eps$ with the renormalised measures $\sqrt{\log(1/\eps)}M_\eps$, which we know converge by \cite{JS,HRVdisk}. To do this, we use a technique similar to that first applied in \cite{AiSh}, and then in \cite{DSRV2}, although the details of the proof are quite different. This is centred around the fact that for the circle average approximation to the free field, there is a natural ``rooted measure'' arising from the definition of $D_\eps$, under which it becomes a 3d Bessel process. We can also show that for a general convolution approximation, under the corresponding rooted measure, the process is approximately a Bessel (unfortunately, this introduces many technicalities in the proof.)  Properties of the Bessel process then allow us to conclude. Finally, in Section \ref{sec::starscale}, we show how the proof can be adapted for the case of $\star$-scale invariant kernels, to give Theorem \ref{theorem::convergencestar}. 

\bigbreak
\noindent \textbf{Acknowledgements} {I am especially grateful to Juhan Aru, Nathana\"{e}l Berestycki and Avelio Sep\'{u}lveda for many invaluable discussions concerning this paper, and Liouville measures in general. I would also particularly like to thank Nathana\"{e}l Berestycki for useful comments on a preliminary draft of the article. Further thanks are due to Wendelin Werner for inviting me to visit ETH, where the idea for this project originated, and to Vincent Vargas, for useful advice on general chaos measures. Finally, I am extremely grateful to Hubert Lacoin for pointing out an error in a previous version of the article.}

\section{Preliminaries}\label{sec::prelims}
\subsection{Log-correlated fields, 2d Gaussian free field and $\star$-scale invariant kernels.}
Let us recap the definition of log-correlated fields from the introduction. Suppose we have a non-negative definite kernel $K(x,y)$ on $D\subset \R^d$ of the form 
\begin{equation}
\label{eqn::kernelform}
K(x,y)=\log(|x-y|^{-1}) + g(x,y)\end{equation} where $g$ is a $C^1$ function on $\bar{D}\times \bar{D}.$ As in the introduction, we let $\mathcal{M}$ be the set of signed measures $\rho:=\rho^+-\rho^-$ whose positive and negative parts satisfy \[ \iint_{D\times D}|K(x,y)|\rho^{\pm}(dx)\rho^{\pm}(dy)<\infty.\]
The centered Gaussian field $h$, with covariance $K(x,y)$, is then defined as in \cite{Ber} to be the unique centred Gaussian process $(h,\rho)_{\rho\in \mathcal{M}}$ indexed by $\mathcal{M}$, such that
\[\cov((h,\rho),(h,\rho'))=\iint_{D\times D} K(x,y)\rho(dx)\rho'(dy)\]
for all $\rho,\rho'\in \mathcal{M}$.

We say that a kernel $K$ is $\star$-scale invariant if it takes the form
\begin{equation}\label{definition::starscale}K(x,y)=\int_1^\infty \frac{k(u|x-y|)}{u} \, du \end{equation} for 
$k:[0,\infty)\to \R$ a compactly supported and positive-definite $C^1$ function with $k(0)=1$. One can easily check that such a $K$ indeed has the form (\ref{eqn::kernelform}). Although this does not cover all kernels satisfying (\ref{eqn::kernelform}) it is still a natural family to consider, due to the nice scaling
relations it possesses, \cite{starscalekernels}. Moreover, the sequence of ``cut off'' approximations to $K$ given by
\[K_\eps(x,y)=\int_1^\frac{1}{\eps} \frac{k(u|x-y|)}{u} \, du\]
yields a family of approximating fields that exhibit a useful decorrelation property (see the proof of Theorem \ref{theorem::convergencestar}). 

As mentioned in the introduction, we will also be interested in the special case when $h$ is a 2-dimensional Gaussian free field. To define this, let $D\subset \C$ be a simply-connected domain. Then the zero boundary Gaussian free field $h$ on $D$ is defined as above, to be the log-correlated field whose kernel $K$ is given by the Green function, $G_D$, for the Laplacian on $D$. This satisfies 
\begin{equation}
\label{eqn::green}
G_D(x,y)=-\log|x-y|+g(x,y)\end{equation} for $g$ a smooth function on $\bar{D}\times \bar{D}$.

One feature that makes the Gaussian free field particularly nice to work with is that it satisfies the following spatial Markov property: if $A\subset D$ is a closed subset, then we can write $h=h^A+h_A$ where $h^A,h_A$ are independent, $h^A$ is a zero-boundary GFF on $D\setminus A$, and $h_A$ is harmonic when restricted to $D\setminus A$. We will see how this is useful to us in Section \ref{sec::freefield}.

In the following we will always assume, for technical reasons and without loss of generality, that our domain $D\subset \R^d$ contains the ball of radius $10$ around the origin. 

\subsection{Convolution with mollifiers}

Suppose we have a field $h$ with kernel $K$ satisfying (\ref{eqn::kernelform}). As discussed in the introduction, since $h$ is not defined pointwise, we need to use a regularisation procedure to define its chaos measures. A natural approach is to convolve $h$ with an approximation to the identity. Let $\theta$ be a non-negative Radon measure on $\R^d$, satisfying \eqref{eqn::cond_theta} and define the convolution approximations $(h_\eps(x))_{\eps>0}$ as in (\ref{eqn::conv_def}). The assumption (\ref{eqn::cond_theta}) on $\theta$ will be important to show various properties of the convolution approximations later on (cf. Lemma \ref{lemma::cov_h} and Corollary \ref{rmk::min_particle}). We remark here that (\ref{eqn::cond_theta})matches the condition given in \cite{Ber}, and includes most of the important examples. In particular, it includes the case when $\theta$ is uniform measure on the unit circle, or when $\theta$ has an $L^p$ density with respect to Lebesgue measure for some $p>2$.

We have the following estimate for the covariances of $(h_\eps)_\eps$: 

\begin{lemma}[\cite{Ber}]\label{lemma::cov_h} 
	Suppose $\theta$ satisfies (\ref{eqn::cond_theta}) and $h_\eps$ is defined as above. Then:
	\begin{equation}
	\label{eqn::cov_h}
	\cov(h_\eps(x),h_{\eps'}(y))=\log(1/(|x-y|\vee \eps \vee \eps'))+\text{\emph{O}}(1).
	\end{equation}
	where by $\text{\emph{O}}(1)$ we mean something that is uniformly bounded in $\eps,\eps'$, and $x,y$.
	
\end{lemma}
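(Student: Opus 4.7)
The plan is to expand the covariance directly from the definition of $h_\eps$ as a pairing $(h, \theta_{\eps,x})$, reducing the estimate to a double integral of $K(u,v)$ against $\theta_{\eps,x}\otimes\theta_{\eps',y}$. Using the decomposition (\ref{eqn::kernelform}), the smooth piece $g$ is bounded on the relevant compact subset of $\bar D \times \bar D$ and contributes only $O(1)$, so the work is concentrated on the logarithmic part.

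Concretely, after the linear change of variables $u = x + \eps a$, $v = y + \eps' b$ and setting $w := x-y$,
\[
\cov(h_\eps(x), h_{\eps'}(y)) = \iint \log\!\frac{1}{|w + \eps a - \eps' b|}\, \theta(da)\theta(db) + O(1),
\]
and the task is to show this double integral equals $\log(1/(|w|\vee\eps\vee\eps')) + O(1)$. I would then split into cases, assuming without loss of generality $\eps \geq \eps'$. When $|w| \geq 2\eps$, the triangle inequality forces $|w+\eps a - \eps' b| \in [|w|/2,\, 3|w|/2]$ throughout the support of $\theta\otimes\theta$, so the integrand equals $\log(1/|w|) + O(1)$ pointwise, which matches $\log(1/(|w|\vee\eps\vee\eps'))$ in this regime.

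The substantive case is $|w| < 2\eps$, where the integral must be shown to equal $\log(1/\eps) + O(1)$. A lower bound $\log(1/\eps) - \log 4$ is immediate from $|w+\eps a - \eps' b| \leq 4\eps$. For the upper bound I would rescale by $\eps$, setting $\tilde w := w/\eps$ and $r := \eps'/\eps \in (0,1]$, so that
\[
\iint \log\!\frac{1}{|w+\eps a - \eps' b|}\, \theta(da)\theta(db) = \log(1/\eps) + \iint \log\!\frac{1}{|\tilde w + a - rb|}\, \theta(da)\theta(db).
\]
For each fixed $b$, the point $v := -\tilde w + rb$ satisfies $|v| \leq 3 < 5$, which is exactly where (\ref{eqn::cond_theta}) applies. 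Combining this with the elementary inequality $\log(1/t) \leq 2/\sqrt{t}$ for $t \in (0,1]$ gives
\[
\int \log_+\!\frac{1}{|a-v|}\, \theta(da) \leq 2\int \frac{\theta(da)}{\sqrt{|a-v|}} = O(1),
\]
uniformly in $v \in B(0,5)$, while $\log_+|a-v|$ is trivially bounded by $\log 4$ on the support of $\theta$. Integrating against $\theta(db)$ then closes the estimate.

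The only genuine obstacle is the coincidence regime $|w|\lesssim \eps$: one must rule out any concentration of $\theta$-mass near a point where the logarithmic singularity of $K$ sits. This is precisely the purpose of hypothesis (\ref{eqn::cond_theta}), which is exactly strong enough to dominate $\log(1/r)$ via the square-root bound above but weak enough to cover the key examples (uniform measure on the sphere, and $L^p$ densities for $p > 2$). Everything outside this regime is routine triangle-inequality work combined with the smoothness of $g$.
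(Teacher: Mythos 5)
The paper does not prove this lemma itself --- it is imported from \cite{Ber} --- so there is no in-paper argument to compare against; your computation is the standard one, and it is in the same style as the paper's own proof of Corollary \ref{rmk::min_particle} (which also reduces to condition (\ref{eqn::cond_theta}) via an elementary bound of a logarithm by a square root). The overall structure is sound: the $g$-part is $O(1)$ by continuity on the compact set, and the two regimes are handled by pointwise comparison and by rescaling plus (\ref{eqn::cond_theta}) respectively, with $\log(1/t)\le 2/\sqrt{t}$ on $(0,1]$ doing exactly the work you describe.

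One arithmetic slip needs fixing: with the threshold $|w|\ge 2\eps$ and $\eps\ge\eps'$, the triangle inequality only gives $|w+\eps a-\eps'b|\ge |w|-(\eps+\eps')\ge |w|-2\eps$, which is not bounded below by $|w|/2$ when $2\eps\le |w|<4\eps$; indeed for $|w|=2\eps$, $\eps'=\eps$, $a=-w/|w|$, $b=w/|w|$ the integrand is singular, so the claimed pointwise inclusion $|w+\eps a-\eps'b|\in[|w|/2,3|w|/2]$ fails on that band. The repair is immediate: put the case split at $|w|\ge 4\eps$ (so that $\eps+\eps'\le 2\eps\le |w|/2$), and run your rescaling argument for $|w|<4\eps$. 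Then $|\tilde w|<4$ and $|v|=|-\tilde w+rb|\le 5$, which is precisely the range on which (\ref{eqn::cond_theta}) is assumed to hold uniformly --- this is evidently why the hypothesis is stated on $B(0,5)$ rather than $B(0,3)$ --- and in that regime $|w|\vee\eps\vee\eps'\in[\eps,4\eps]$, so the target is still $\log(1/\eps)+O(1)$. With that adjustment the proof is complete.
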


\begin{remark}Similarly, whenever we use order notation in the sequel, we will mean the order in $\eps$, uniformly in whatever spatial position(s) we are considering.
\end{remark}
\subsection{Maxima of the mollified fields} It will also be important for us in this article to get a hold of how fast our approximations $h_\eps$ can blow up. The Lemma below follows from classical Gaussian estimates, along the same lines as \cite[Proposition 19]{DSRV2} and \cite[Proposition 2.4] {Lacoin} (although it is actually a weaker bound than either of these).
%

\begin{lemma}\label{rmk::min_particle}
	Suppose that $\theta$ is a positive Radon measure on $\R^d$ satisfying (\ref{eqn::cond_theta}), and that $h$ has covariance kernel $K$ satisfying (\ref{eqn::kernelform}). Assume further that $\Os\subset \R^d$ is bounded. Then 
	\[ \inf_{\eps} \inf_{x\in \Os} \{-h_\eps(x)+\sqrt{2d}\log(1/\eps) \} > - \infty \]
	almost surely.
\end{lemma}

\begin{proof} 
	Without loss of generality we assume that $\mathcal{O}=[0,1]^d$. 
	Following the proof of \cite[Proposition 2.4]{Lacoin} we let, for $n\in \mathbb{N}$ and $\mathbf{i}\in I_n:=\{ \mathbf{i}\in \mathbb{Z}^d \,: \, \mathbf{i}e^{-n}\in[0,1]^d\}$, 
	\[ Y_{n,\mathbf{i}}:= h_{e^{-n}}(\mathbf{i}e^{-n}) \text{ and } Z_{n,\mathbf{i}}:=\textstyle\sup_{{\eps\in (e^{-(n+1)},e^{-n}],  x\in \mathbf{i}e^{-n}+[0,e^{-n})^d}} \, \big( h_\eps(x)-Y_{n,\mathbf{i}}\big). \]
	Since $Y_{n,\mathbf{i}}$ is centered and Gaussian with variance equal to $n+\mathrm{O}(1)$ uniformly in $\mathbf{i}$ (by Lemma \ref{lemma::cov_h}), the result of Lemma \ref{rmk::min_particle} follows from an easy application of the Borel-Cantelli lemma (exactly as carried out in the proof of \cite[Proposition 2.4]{Lacoin}), as soon as we can show that the $(Z_{n,\mathbf{i}})$ are uniformly sub-Gaussian. That is, it suffices to prove that for some $c>0$, we have
	\begin{equation}\label{eqn:unif_subgaussian}
		\mathbb{P}(Z_{n,\mathbf{i}}\ge \lambda) \le 2 \exp(-c\lambda^2))
	\end{equation}
for all $n\in \mathbb{N}, \mathbf{i}\in I_n$ and $\lambda \ge 0$.

For this, note that by Lemma \ref{lemma::cov_h} again, $\textstyle\sup_{{\eps\in (e^{-(n+1)},e^{-n}],  x\in \mathbf{i}e^{-n}+[0,e^{-n})^d}} \, \mathbb{E}[\big( h_\eps(x)-Y_{n,\mathbf{i}}\big)^2]$ is uniformly bounded in $\mathbf{i},n$. This means that by the Borrel-TIS inequality, \eqref{eqn:unif_subgaussian} holds as long as 
\[ \sup_n \sup_{i\in I_n} \mathbf{E}(Z_{n,\mathbf{i}})<\infty.\]
This in turn follows from Fernique's majorizing criterion, exactly as in \cite[Proposition 2.4]{Lacoin}, if we can prove that \[d_n((s,x),(t,y)):=\mathbb{E}\big[\big(h_{e^{-n}e^{-s}}(e^{-n}x)-h_{e^{-n}e^{-t}}(e^{-n}y)
\big)^2\big]^{1/2}\]
is uniformly H\"{o}lder continuous on $[0,1]\times[0,1]^d$.

To show this, we first write
	\begin{align*}& \mathbb{E}[(h_\eps(x)-h_\eps(y))^2] = \iint 2\log\left|\frac{x-y+\eps(v-w)}{\eps(v-w)}\right|\theta(dv)\theta(dw)\\
	& +\iint [g(x+\eps v,x+\eps w)+g(y+\eps v, y+\eps w)-g(x+\eps v, y+\eps w)-g(y+\eps v, x+\eps w)] \theta(dv)\theta(dw),\end{align*}
	which is bounded above by a universal constant times $(|x-y|/\eps)$ for $|x-y|\le \eps$, using that $g$ has bounded derivative on $[0,1]^d$, the assumption (\ref{eqn::cond_theta}) on $\theta$, and the  bound
	$\log(1+a)\le a \text{ for } a\ge 0.$
	By the triangle inequality, this implies that $d_n((s,x),(t,y))^2 \le |t-s|+|x-y|^{1/2}$ for all $(s,x)$ and $(t,y)$ in $[0,1]\times[0,1]^d$, which completes the proof.

%
\end{proof}

\subsection{Previous works on subcritical and critical Gaussian multiplicative chaos} 

As discussed in the introduction, Gaussian multiplicative chaos theory is a framework we can use to make sense of measures of the form ``$\e^{\gamma h(x)-\gamma^2/2\var{h(x)}}dx$'' for log-correlated Gaussian fields $h$. This stems from the classical martingale theory of the branching random walk \cite{Biggins, K} and multiplicative cascades \cite{kp}, and was initiated by Kahane \cite{kahane} in the 1980's. In the special case where $h$ is a 2d Gaussian free field, the Gaussian multiplicative chaos measure is often referred to as the Liouville measure \cite{DS}. Here we will state precisely some of the results mentioned in the introduction. 

When $\gamma<\sqrt{2d}$ (the subcritical regime) there are various approximation procedures that can be used to construct the chaos measure with parameter $\gamma$. One natural choice is to use the convolution approximations $h_\eps$ described in the previous section, and define approximate measures $\mu_\eps^\gamma$ by setting 
\begin{equation}
\label{eqn::mepsgamma}
\mu^\gamma_\eps(dx) := \mathbb{E}[\e^{\gamma h_\eps(x)}]^{-1}\e^{\gamma h_\eps(x)}dx
\end{equation}
for $\eps>0$. Note that the normalisation factor here is equal to $\eps^{\frac{\gamma^2}{2}}$ (up to a bounded constant that depends on $x$). We have the following result.

\begin{theorem}[\cite{Ber}]\label{theorem::subcriticalgmc}
	For $\gamma < \sqrt{2d}$ the measures $\mu^\gamma_\eps$ converge to a non-trivial measure $\mu^\gamma$ weakly in probability. Moreover, for any fixed Borel set $\Os$ we have that $\mu^\gamma_\eps(\Os)$ converges in $L^1$.
\end{theorem}

We emphasise that this limit $\mu^\gamma$ does not depend on the choice of mollifier $\theta$. In fact, one can approximate the field in other, completely different ways (for instance using a Karhunen--Lo\`{e}ve expansion \cite{Ber}) and find the same limit. For the case of the 2d free field, this will even work for ``non-Gaussian'' approximations. Indeed, in \cite{uslqg} the authors construct (the same) Liouville measure for $\gamma<2$ using sequences of so-called ``local sets'' of the field.

For general $h$, the subcritical measures $\mu^\gamma$ with $\gamma<\sqrt{2d}$ are almost surely atomless, and assign positive mass to any open set. On the other hand, as discussed in the introduction, it is known that for $\gamma \geq \sqrt{2d}$, the measures $\mu^\gamma_\eps$ converge to zero \cite{RV}. To define the critical (and supercritical) measures we must therefore make an additional renormalisation. These cases turn out to be much more tricky to deal with than the subcritical case, in part because the limiting measure will not possess any moments of order greater than or equal to $1$. Consequently a complete theory is still lacking, but some progress has been made (see \cite{rvreview} for a survey). Here and in the rest of this paper we will discuss the critical case $\gamma=\sqrt{2d}$. 

\subsubsection{Critical measures} Motivated by the corresponding constructions for multiplicative cascades, \cite{BK, AiSh}, we expect to be able to obtain a non-trivial measure at criticality using either of two renormalisation procedures: one deterministic and one random. Let us outline how this should work. Suppose you have some approximations $h_\eps$ to a log-correlated field $h$, that are continuous fields for each $\eps$. Then each of the following sequences should converge to the same (up to a constant) limiting measure.
\begin{itemize}
	\item The sequence of measures $\sqrt{\log(1/\eps)}\mu^{\sqrt{2d}}_\eps:=\sqrt{\log(1/\eps)}M_\eps$, where $\mu_\eps^\gamma$ is defined by (\ref{eqn::mepsgamma}). This is known as the Seneta--Heyde renormalisation.
	\item  The sequence of signed ``derivative'' measures, obtained by taking the derivative of $\mu_\eps^\gamma$ with respect to $\gamma$ and evaluating at $\gamma=\sqrt{2d}$. That is, the sequence 
	\[D_\eps(dz) := (-h_\eps(z)+\gamma \mathbb{E}[h_\eps(z)^2])\exp\left(\gamma h_\eps(z)- \frac{\gamma^2}{2}\E{h_\eps(z)^2} \right)dz\]
	(where we have also multiplied by $-1$ in order to yield a non-negative limit measure.)
\end{itemize} 

This statement was verified for a specific set-up in \cite{DSRV, DSRV2}.

\begin{theorem}[\cite{DSRV,DSRV2}]\label{theorem::critical_star}
	Suppose $h$ has a $\star$-scale invariant kernel $K(x,y)=\int_1^\infty k(u|x-y|)/u \, dx $ as in (\ref{definition::starscale}) and the approximate fields $h_\eps$ have kernels given by
	\[K_\eps(x,y):= \int_1^{1/\eps} \frac{k(u|x-y|)}{u} \, du.\] Then the two sequences of approximating measures described above converge weakly in probability to the same limiting measure, up to a constant $\sqrt{2/\pi}$. In particular, for any open set $\Os\subset \R^d$, $\sqrt{\pi/2}\sqrt{\ln(1/\eps)}M_\eps(\Os)$ and $D_\eps(\Os)$ converge in probability and in $L^p$ (any $p<1$) to the same limit. 
\end{theorem}
The authors in \cite{DSRV,DSRV2} were also able to generalise this approach to the case when $h$ is a 2d Gaussian free field, using a white-noise decomposition for the field and another specific sequence of ``cut-off'' approximations for the kernel. However, both of these proofs rely strongly on a martingale property satisfied by the choice of approximating fields $h_\eps$. In particular, they do not extend to general convolution approximations. 

Convolution is clearly a natural way to approximate the field $h$, and so we would like to have a version of Theorem \ref{theorem::critical_star} for such approximations. Using comparison techniques, Junnila and Saksman were able to do this for the Seneta--Heyde renormalisation.

\begin{theorem}[\cite{JS}]
	\label{theorem::critical_sh}
	Let $h$ be a $\star$-scale invariant field, and assume that in addition to (\ref{eqn::cond_theta}), the mollifier $\theta$ has a H\"{o}lder continuous density. Then the measures $\sqrt{\log(1/\eps)}M_\eps$ converge to a limiting measure weakly in probability as $\eps\to 0$. This limit is equal to $\sqrt{2/\pi}\mu'$ where \begin{itemize}
		\item $\mu'$ is the measure from Theorem \ref{theorem::critical_star},
		\item $\E{\mu'(\Os)}=\infty$ for any $\Os\subset \R^d$ and
		\item $\mu'(\Os)$ is positive almost surely for any $\Os\subset \R^d$.
	\end{itemize}
	Again we have that for any open set $\Os\subset \R^d$, $\sqrt{\pi/2}\sqrt{\ln(1/\eps)}M_\eps(\Os)$ and $D_\eps(\Os)$ converge in probability and in $L^p$ ($p<1$) to the same limit. 
\end{theorem}

This has also been proven for the 2d-Gaussian free field. 
\begin{theorem}[\cite{HRVdisk,JS}]\label{theorem::critical_sh_gff}
	Let $h$ be a 2d-GFF, and take any mollifier $\theta$ satisfying (\ref{eqn::cond_theta}). Then the measures $\sqrt{\log(1/\eps)}M_\eps$ converge to a limiting measure weakly in probability as $\eps\to 0$. This limit is equal to $\sqrt{2/\pi}\mu'$ where $\mu'$ is the critical Liouville measure of \cite{DSRV2}.
\end{theorem}

Note that Theorem \ref{theorem::critical_sh_gff} places a weaker constraint on the mollifier $\theta$. This is due to the proof given in \cite{HRVdisk}. The aim of this paper will be to prove the analogues of Theorems \ref{theorem::critical_sh} and \ref{theorem::critical_sh_gff} for the derivative renormalisation.

\subsection{Bessel processes}\label{sec::bessel}

To conclude this introduction, we need to recall some basic properties of Brownian motion; in particular, of the 3dimensional Bessel process. Let $\mathbf{P}$ denote the law of a standard Brownian motion $B_t$ in $\R$, started from a possibly random position $B_0$ such that $\mathbf{P}(B_0<0)>0$ and $B_0$ has finite exponential moments of all orders.\footnote{So that the expectation of  \eqref{eqn::martingale} lies in $(0,\infty)$.} Then it is easy to check that for any $\beta,\gamma>0$, the process \begin{equation}\label{eqn::martingale}(-B_t+\gamma\var{B_t}+\beta)\I_{\{-B_u+\gamma\var{B_t}+\beta>0\; \forall u\in[0,t]\}} \e^{\gamma B_t-\frac{\gamma^2}{2}\var{B_t}}\end{equation} 
is a non-negative martingale. Let $\mathcal{F}_t$ be the filtration generated by the Brownian motion and define a new measure $\mathbf{Q}$ by letting its Radon--Nikodym derivative when restricted to $\mathcal{F}_t$ be given by the martingale (normalised to have expectation one) at time $t$. One can check that this yields a well-defined law $\mathbf{Q}$, under which the process $(-B_t+\gamma\var{B_u}+\beta)_{t\geq 0}$ is a 3d Bessel process started from $-B_0+\gamma \var{B_0}+\beta$. Note that this starting position will also be biased, and will be positive almost surely under $\mathbf{Q}$. The next lemma records some properties of the 3d Bessel process that we will use in our proofs.

\begin{lemma} \label{lemma::bessel} Let $(X_t)_{t\geq 0}$ be a 3d Bessel process started from a random (positive) position $X_0$ with finite variance, and law $\mathbf{Q}$. Then 
	\begin{enumerate}
		\item $\mathbf{Q}[\frac{1}{X_t}]=\sqrt{
			\frac{2}{\pi t}} + \text{\emph{o}}(t^{-1/2})$ where the error term is less than $\frac{2}{\sqrt{t}}(\frac{\mathbf{Q}[X_0^2]}{t}+\frac{\mathbf{Q}[X_0]}{\sqrt{t}})$.
		\item $\mathbf{Q}[\frac{1}{X_t^2}]\leq 2/t$, uniformly in the starting position.
		\item $\mathbf{Q}[\frac{\sqrt{u}}{\log(2+u)^2} \leq X_u \leq  (1+\sqrt{u\log(1+u)}) \;\text{eventually}\,]=1$ 
		\item \[\mathbf{Q}\left[\frac{\sqrt{u}}{R\log(2+u)^2}\leq X_u \leq R(1+\sqrt{u \log(1+u)}) \;\forall u\geq 0\right] \to 1\] as $R\to \infty$, uniformly over $X_0$ with $\mathbf{Q}[X_0]\leq K$ for any $K$.  
		\item $\mathbf{Q}[\frac{1}{X_t}\I_{\{X_t\leq t^{1/4}\}}]\leq \frac{C}{2t}$, uniformly in the starting position, where $C$ is an absolute constant.	
	\end{enumerate}
	
\end{lemma}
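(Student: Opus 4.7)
The proof relies on direct computations with the transition density of the 3d Bessel process,
$$p_t(x,y) = \frac{y}{x\sqrt{2\pi t}}\bigl(e^{-(y-x)^2/(2t)} - e^{-(y+x)^2/(2t)}\bigr), \qquad x,y>0,$$
together with Brownian scaling $X_{ct}/\sqrt c \stackrel{d}{=} X'_t$ (with $X'_0 = X_0/\sqrt c$), the classical hitting probability $\mathbf{Q}_b[\tau_a<\infty]=a/b$ for $a<b$ (from harmonicity of $1/r$ on $\mathbb{R}^3\setminus\{0\}$, which also makes $1/X_t$ a nonnegative supermartingale), and a pathwise coupling: if $X^a,X^b$ are Bessel 3 processes driven by the same Brownian motion with $X^a_0>X^b_0$, then $X^a_t>X^b_t$ for all $t$, since $d(X^a-X^b) = -(X^a-X^b)/(X^aX^b)\,dt$.

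Parts (1), (2) and (5) are direct integrations. For (1), conditioning on $X_0=x$ and substituting $u=(y\mp x)/\sqrt t$ yields
$$\mathbf{Q}_x[1/X_t] = \frac{2}{x\sqrt{2\pi}}\int_0^{x/\sqrt t} e^{-u^2/2}\, du;$$
Taylor-expanding the integrand (and using $\int_0^\infty e^{-u^2/2}\,du=\sqrt{\pi/2}$ for the regime $x\gtrsim\sqrt t$) produces the leading term $\sqrt{2/(\pi t)}$ with a remainder of order $X_0^2/t^{3/2}+X_0/t$ after averaging against the law of $X_0$. For (5), I would restrict the same calculation to $y\in[0,t^{1/4}]$ and use $1-e^{-z}\le z$ to obtain
$$e^{-(y-x)^2/(2t)}-e^{-(y+x)^2/(2t)} = e^{-(y-x)^2/(2t)}\bigl(1-e^{-2xy/t}\bigr) \le 2xy/t,$$
which after integration over $y$ cancels the $1/x$ prefactor in $p_t(x,y)/y$ and gives $C/t$ uniformly in $x$. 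For (2), scaling reduces the claim to $\mathbf{Q}_s[1/X_1^2]\le 2$ uniformly in $s\ge 0$; at $s=0$ direct computation gives $1$ (since $|B_1|^2\sim\chi_3^2$ and $\mathbf{E}[1/\chi_3^2]=1$), and the coupling above gives monotonicity $\mathbf{Q}_s[1/X_1^2]\le\mathbf{Q}_0[1/X_1^2]=1$ for all $s\ge 0$.

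For (3) I would run Borel-Cantelli at the dyadic times $u_n=2^n$. The small-ball bound $\mathbf{Q}_s[X_1\le\eta]\le \mathbf{Q}_0[X_1\le\eta]\le C\eta^3$ (first inequality by the same coupling as in (2), second by direct integration of $p_1(0,\cdot)$) combined with scaling yields $\mathbf{Q}[X_{u_n}\le\sqrt{u_n}/\log^2(2+u_n)]\le C/n^6$, while a Gaussian-type upper tail $\mathbf{Q}_x[X_u\ge A\sqrt u]\le C e^{-cA^2}$ (also immediate from $p_u$) handles the upper bound at dyadic scales. To extend from dyadic to all $u$, the hitting probability $\mathbf{Q}_b[\tau_a<\infty]=a/b$ controls the infimum on each interval $[2^n,2^{n+1}]$, and Doob's submartingale inequality applied to $X_u^2$ (using $\mathbf{Q}[X_u^2]=X_0^2+3u$) controls the supremum. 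Both contributions remain summable in $n$, so Borel-Cantelli gives (3).

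For (4) I would run the same argument quantitatively with an explicit parameter $R$: each bound acquires a factor $R^{-p}$ (with $p=3/2$ from the small-ball estimate, and any $p$ for the upper tail via higher moments), so the total failure probability summed over dyadic scales is $O(R^{-3/2})$, with constant depending on $K$ only through a single Markov estimate on $X_0$. The main obstacle will be uniformity over initial distributions in the small-$u$ regime, where the threshold $\sqrt u/(R\log^2(2+u))$ can approach $X_0$ itself; I would treat this via Doob's supermartingale inequality $\mathbf{Q}[\sup_{u\ge\delta}1/X_u\ge A]\le \mathbf{Q}[1/X_\delta]/A$ for a small fixed $\delta>0$, bounding $\mathbf{Q}[1/X_\delta]\le\sqrt{\mathbf{Q}[1/X_\delta^2]}\le\sqrt{2/\delta}$ by Cauchy--Schwarz and (2), so that the bound is uniform in the initial law.
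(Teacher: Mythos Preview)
Your proposal is correct and, for parts (1), (2) and (5), matches the paper's approach exactly: the paper simply says these ``are straightforward to verify using direct calculation and scaling arguments,'' and you have carried out precisely those calculations (transition density, scaling, plus the monotone coupling to reduce to the $X_0=0$ case).

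For (3) and (4) your route differs from the paper's. The paper does not prove (3) directly but cites Motoo's classical integral test for one-dimensional diffusions, which gives both the upper and lower envelopes in one stroke; it then deduces (4) from (3) by a soft argument (by (3) and continuity of paths, the random variable $R^*:=\inf\{R:\text{the two-sided bound holds for all }u\ge 0\}$ is a.s.\ finite, and one controls its tail via Markov's inequality, the only dependence on the initial law entering through $\mathbf{Q}[X_0]$). Your approach instead proves (3) from scratch via Borel--Cantelli on dyadic scales, and then re-runs the same estimates quantitatively with an explicit $R$ to get (4). This is more hands-on and self-contained, and has the advantage that the uniformity over initial laws in (4) is visible in each estimate rather than hidden in a coupling-plus-Markov step; the cost is that one must be slightly careful when passing from dyadic times to all $u$ for the lower bound (the hitting-probability estimate you invoke needs to be combined with the dyadic small-ball bound, giving a contribution of order $n^{-2}$ per scale, which is still summable). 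The paper's route is shorter but relies on an external reference; yours is longer but elementary.
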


\begin{proof} (1),(2) and (5) are straightforward to verify using direct calculation and scaling arguments. (3) is a classical result due to Motoo \cite{Motoo} and then (4) follows by continuity and Markov's inequality.  
\end{proof}

\section{Proof of Theorem \ref{proposition::convergence}} \label{sec::freefield}

In this section we will work to prove Theorem \ref{proposition::convergence}. Recall that this concerns the case when the underlying field $h$ is a 2d Gaussian free field in a domain $D\subset \R^2$. For this choice of field, there is a particular convolution approximation, when $\theta$ is uniform measure on the unit circle, that plays an important role. We call this the circle average process and distinguish it by writing $\tilde{h}_\eps:=h\star \theta_{\eps}$. The Markov property of the field allows us to deduce the following:
\begin{lemma}
	\label{lemma::circle_average_bm}
	For each $x\in D$ and $\delta<d(x,\partial D)$, $\{ \tilde{h}_{\e^{-u}}(x): u\geq \log(1/\delta) \}$
	is a Brownian motion started from $\tilde{h}_{\delta}(x)$. 
\end{lemma}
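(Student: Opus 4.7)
The plan is to read off the Brownian motion structure from the Markov property of the GFF together with the mean value property for harmonic functions. Fix $x\in D$ and $\delta < d(x, \partial D)$, and for each $u\geq u_0:=\log(1/\delta)$ set $A_u := D\setminus B(x, \e^{-u})$, which is a closed subset of $D$.

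The key step is to use, for each $u_1 \geq u_0$, the Markov decomposition $h = h^{A_{u_1}} + h_{A_{u_1}}$, where $h^{A_{u_1}}$ is a zero-boundary GFF in $B(x, \e^{-u_1})\subset D$ and $h_{A_{u_1}}$ is independent of it and harmonic there. Since $h^{A_{u_1}}$ vanishes outside $B(x,\e^{-u_1})$, the values $\tilde{h}_{\e^{-v}}(x)$ for $v\leq u_1$ are measurable with respect to $\sigma(h_{A_{u_1}})$ (being circle averages of $h_{A_{u_1}}$ at radii $\geq \e^{-u_1}$), so the filtration $\mathcal{F}_{u_1}$ they generate is independent of $h^{A_{u_1}}$. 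Moreover, the mean value property applied to $h_{A_{u_1}}$ on $B(x, \e^{-u_1})$ gives $\tilde{h}_{\e^{-u_1}}(x) = h_{A_{u_1}}(x)$, and also shows that for any $u_2\geq u_1$ the circle average of $h_{A_{u_1}}$ at the smaller radius $\e^{-u_2}$ still equals $h_{A_{u_1}}(x)$. Hence the increment
\[
\tilde{h}_{\e^{-u_2}}(x) - \tilde{h}_{\e^{-u_1}}(x)
\]
is exactly the circle average of $h^{A_{u_1}}$ on $\partial B(x,\e^{-u_2})$, and in particular is independent of $\mathcal{F}_{u_1}$.

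To identify the law of the increment as centered Gaussian with variance $u_2-u_1$, a short covariance calculation suffices: writing $G_D(y,z) = -\log|y-z| + g(y,z)$ as in (\ref{eqn::green}), with $g$ harmonic in each variable on $D$, repeated applications of the mean value property (to the logarithmic part once and to $g$ twice) give
\[
\cov(\tilde{h}_{\eps_1}(x), \tilde{h}_{\eps_2}(x)) = -\log(\eps_1\vee \eps_2) + g(x,x)
\]
for all $\eps_1, \eps_2 < d(x,\partial D)$. Substituting $\eps_i = \e^{-u_i}$ yields the desired variance $u_2-u_1$. Combining everything, $\{\tilde{h}_{\e^{-u}}(x) - \tilde{h}_\delta(x) : u\geq u_0\}$ is a process with independent centered Gaussian increments of the correct variance, i.e. a standard Brownian motion, so $\{\tilde{h}_{\e^{-u}}(x) : u\geq u_0\}$ is a Brownian motion started from $\tilde{h}_\delta(x)$. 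The only place requiring a little care is that $h^{A_{u_1}}$ is a distribution rather than a function, so the ``circle average'' statements must be interpreted in the standard weak sense; the assumption $\delta < d(x,\partial D)$ is precisely what ensures all circles under consideration lie strictly inside $D$, so the mean value arguments for $h_{A_{u_1}}$ go through without issue.
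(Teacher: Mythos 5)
Your proof is correct and is exactly the argument the paper has in mind: the paper states this lemma without proof, citing only the Markov property of the GFF, and your combination of the decomposition $h=h^{A}+h_{A}$, the mean value property for the harmonic part, and the covariance computation $\cov(\tilde{h}_{\eps_1}(x),\tilde{h}_{\eps_2}(x))=-\log(\eps_1\vee\eps_2)+g(x,x)$ is the standard way of filling in that deduction.
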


We will also need to compare $\tilde{h}_\eps$ with a general convolution approximation $h_\eps$. 

\begin{lemma} \label{lemma::compare_average}
	Let $\he$ and $\tihe$ be the mollified and circle averages of $h$ at a point $x$ with $d(x,\partial D)>\eps$. Then we can write 
	\begin{equation}\label{eqn::decomp} \he(x) = \lex(x) \tihe(x)+Y_\eps(x)\end{equation} where 
	$\lex(x)=1+\text{\emph{O}}(\log(1/\eps)^{-1})$ (uniformly in $x$) and $Y_\eps(x)$ is independent of $\tilde{h}_\eps(x)$, Gaussian, and has mean $0$ and variance $\text{\emph{O}}(1)$. 
\end{lemma}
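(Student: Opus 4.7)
The plan is to obtain the decomposition via orthogonal projection in the Gaussian Hilbert space generated by $h$. Since $\he(x)$ and $\tihe(x)$ are both linear functionals of a Gaussian field, they are jointly Gaussian, so setting
\[
\lex(x):=\frac{\cov(\he(x),\tihe(x))}{\var{\tihe(x)}},\qquad Y_\eps(x):=\he(x)-\lex(x)\tihe(x)
\]
automatically makes $Y_\eps(x)$ uncorrelated with $\tihe(x)$, and jointly Gaussian uncorrelated variables are independent. Both $\he(x)$ and $\tihe(x)$ are centred, so $Y_\eps(x)$ has mean $0$. Hence the structural part of the lemma comes for free; everything reduces to estimating $\lex(x)$ and $\var{Y_\eps(x)}$ via the three (co)variances involved.

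The key inputs are the three asymptotics
\[
\var{\he(x)}=\log(1/\eps)+\text{O}(1),\quad \var{\tihe(x)}=\log(1/\eps)+\text{O}(1),\quad \cov(\he(x),\tihe(x))=\log(1/\eps)+\text{O}(1),
\]
uniform in $x$ with $d(x,\partial D)>\eps$. The first two are immediate from Lemma \ref{lemma::cov_h} (applied to $\theta$ and to the uniform measure on the circle respectively; note the circle measure trivially satisfies \eqref{eqn::cond_theta}). The third is not quite in Lemma \ref{lemma::cov_h} as stated, because it mixes two different mollifiers, so I would reprove it by hand: writing $\tilde\theta$ for the uniform measure on the unit circle and using the kernel form $K(y,z)=-\log|y-z|+g(y,z)$ from \eqref{eqn::kernelform},
\[
\cov(\he(x),\tihe(x))=\iint K(x+\eps u,x+\eps v)\,\theta(du)\,\tilde\theta(dv)=\log(1/\eps)-\iint\log|u-v|\,\theta(du)\,\tilde\theta(dv)+\text{O}(1),
\]
and the remaining double integral is $\text{O}(1)$ because $|\log(1/r)|\le C r^{-1/2}$ on $(0,1]$ combines with \eqref{eqn::cond_theta} to give a uniform-in-$v$ bound on $\int|\log|u-v||\,\theta(du)$, while $\tilde\theta$ has finite mass.

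Given these three asymptotics, $\lex(x)=(\log(1/\eps)+\text{O}(1))/(\log(1/\eps)+\text{O}(1))=1+\text{O}(\log(1/\eps)^{-1})$, and by Pythagoras in $L^2$,
\[
\var{Y_\eps(x)}=\var{\he(x)}-\lex(x)^2\var{\tihe(x)}=(\log(1/\eps)+\text{O}(1))-(1+\text{O}(\log(1/\eps)^{-1}))(\log(1/\eps)+\text{O}(1))=\text{O}(1),
\]
which gives the claimed bounds. The only genuine technical step is the mixed-mollifier covariance estimate in the second paragraph; everything else is formal Gaussian projection. I do not expect any of this to be hard, but uniformity of the $\text{O}(1)$ in the spatial point $x$ must be tracked carefully, so I would fix an inner domain $D'\Subset D$ at the outset and note that the smoothness of $g$ on $\bar D\times\bar D$ plus the uniformity already built into \eqref{eqn::cond_theta} provides this without further work.
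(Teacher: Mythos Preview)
Your proposal is correct and is essentially the same argument as the paper's: both define $\lex(x)$ as the regression coefficient $\cov(\he(x),\tihe(x))/\var{\tihe(x)}$, set $Y_\eps(x)=\he(x)-\lex(x)\tihe(x)$, invoke joint Gaussianity for independence, and deduce the asymptotics from the three (co)variance estimates. The only difference is that you spell out the mixed-mollifier covariance computation explicitly, whereas the paper simply cites it as ``an easy calculation using \eqref{eqn::green}''; your version is a faithful expansion of that step.
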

\begin{proof}
	For this, we observe (by an easy calculation using (\ref{eqn::green})) that 
	\[ \cov(h_\eps(x),\tilde{h}_\eps(x))=\log(1/\eps)+\text{O}(1)\]
	for any $x\in D$ and $\eps<d(x,\partial D)$. Let $\lambda_\eps(x):=\cov(h_\eps(x),\tilde{h}_\eps(x))/\cov(\tilde{h}_\eps(x),\tilde{h}_\eps(x))$, so that by direct calculation $\cov(h_\eps-\lambda_\eps \tilde{h}_\eps,\tilde{h}_\eps)=0$. Then by Gaussianity, $\tilde{h}_\eps$ and $Y_\eps:=h_\eps-
	\lambda_\eps\tilde{h}_\eps$ are independent. Using Lemmas \ref{lemma::circle_average_bm} and \ref{lemma::cov_h}, we see that the variance of $Y_\eps$ is $\text{O}(1)$ and that $\lambda_\eps=1+\text{O}(\log(1/\eps)^{-1})$.
\end{proof}
\begin{remark}
	We will often drop the $x$ from $\lex(x)$ when it is clear from the context.\end{remark}

\begin{lemma}
	\label{rmk::covYyhx}
	$Y_\eps(x)$ also has bounded covariances with $Y_\eps(y)$ and $\tilde{h}_\eps(y)$ for any $x,y\in D$. Moreover, for $\delta\geq\eps$, we have \[-\rho_\delta^\eps(x)/2:=\cov(Y_\eps(x),\tilde{h}_\delta(x))=\text{\emph{O}}(1),\] uniformly in $\eps,\delta$ and $x$.  
\end{lemma}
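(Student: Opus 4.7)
The plan is to compute each of the three covariances directly using the decomposition $Y_\eps(x)=h_\eps(x)-\lambda_\eps(x)\tilde h_\eps(x)$ from Lemma \ref{lemma::compare_average}, together with the covariance estimate (\ref{eqn::cov_h}) from Lemma \ref{lemma::cov_h}. The key observation is that since $\lambda_\eps = 1 + \mathrm{O}(\log(1/\eps)^{-1})$ and every pairwise covariance between $h_\eps$ and $\tilde h_\eps$ at points in $D$ equals the same logarithm up to an $\mathrm{O}(1)$ error, the leading logarithmic terms cancel after using bilinearity, and what remains is $\mathrm{O}(\log(1/\eps)^{-1})\cdot\log(1/\eps)+\mathrm{O}(1)=\mathrm{O}(1)$.

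For the first claim, I would expand
\[
\cov(Y_\eps(x),\tilde h_\eps(y))=\cov(h_\eps(x),\tilde h_\eps(y))-\lambda_\eps(x)\cov(\tilde h_\eps(x),\tilde h_\eps(y)).
\]
By Lemma \ref{lemma::cov_h} both covariances on the right equal $\log(1/(|x-y|\vee\eps))+\mathrm{O}(1)$, so the expression becomes $(1-\lambda_\eps(x))\log(1/(|x-y|\vee\eps))+\mathrm{O}(1)$; since $|x-y|\vee\eps\ge\eps$, this is $\mathrm{O}(1)$.

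For the second claim, expanding bilinearly gives
\[
\cov(Y_\eps(x),Y_\eps(y))=(1-\lambda_\eps(x))(1-\lambda_\eps(y))\log(1/(|x-y|\vee\eps))+\mathrm{O}(1),
\]
after the four instances of the leading log combine to $(1-\lambda_\eps(x))(1-\lambda_\eps(y))$ times a single $\log$. Each factor $(1-\lambda_\eps(\cdot))$ is $\mathrm{O}(\log(1/\eps)^{-1})$, so the whole product is bounded.

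For the third (and arguably most delicate) claim, I would use Lemma \ref{lemma::cov_h} in the form $\cov(h_\eps(x),\tilde h_\delta(x))=\log(1/(\eps\vee\delta))+\mathrm{O}(1)=\log(1/\delta)+\mathrm{O}(1)$ and $\cov(\tilde h_\eps(x),\tilde h_\delta(x))=\log(1/\delta)+\mathrm{O}(1)$ (consistent, for $\delta\ge\eps$, with the Brownian motion structure of Lemma \ref{lemma::circle_average_bm}). Subtracting,
\[
\cov(Y_\eps(x),\tilde h_\delta(x))=(1-\lambda_\eps(x))\log(1/\delta)+\mathrm{O}(1),
\]
which is $\mathrm{O}(1)$ uniformly in $\delta\ge\eps$ because $\log(1/\delta)\le\log(1/\eps)$. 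There is no real obstacle here: the whole argument is a careful bookkeeping exercise in which the precise form $\lambda_\eps=1+\mathrm{O}(\log(1/\eps)^{-1})$ is exactly what is needed to kill the logarithmic divergence. The mildest care is required in the last part, to ensure that the restriction $\delta\ge\eps$ is used to bound $\log(1/\delta)$ by $\log(1/\eps)$.
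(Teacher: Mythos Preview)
Your proposal is correct and follows essentially the same approach as the paper: both expand $Y_\eps(x)=h_\eps(x)-\lambda_\eps(x)\tilde h_\eps(x)$ bilinearly and use the covariance estimates of Lemma~\ref{lemma::cov_h}, so that the leading logarithmic terms cancel up to a factor $(1-\lambda_\eps(x))=\mathrm{O}(\log(1/\eps)^{-1})$. The paper's proof is simply a terser version of yours, stating the first claims follow by ``direct calculation similar to the above'' and writing out only the decomposition for the final claim.
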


\begin{proof}
	The first claims follow using direct calculation similar to the above. For the final claim note that $\mathbb{E}[Y_\eps(x)\tilde{h}_\delta(x)]=\mathbb{E}[h_\eps(x)\tilde{h}_\delta(x)]-\lex(x)\mathbb{E}[\tilde{h}_\eps(x)\tilde{h}_\delta(x)]$ where both expressions on the right-hand side are $\log(1/\delta)+\text{O}(1)$.
\end{proof}
\begin{remark}
	Lemma \ref{lemma::compare_average} implies that $\rho_\eps^\eps(x)=0$	for all $\eps,x$.
\end{remark}

Let us now move on to the proof of Theorem \ref{proposition::convergence}. By standard arguments, see \cite{Ber}, we need only prove that $D_\eps(\Os)\to \mu'(\Os)$ in probability for each fixed $\Os \subset D$. In fact, without loss of generality we may assume that $\Os:=B(0,1)$ is the unit disc. From now on we will work with this assumption.

\subsection{A uniformly integrable family.}\label{sec::ui}

We know from \cite{DSRV2} that if $\mu'$ is the critical Liouville measure, $\mu'(\Os)$ has infinite expectation for any $\Os\subset D$. Therefore, we cannot hope to have $L^1$-convergence or uniform integrability of $D_\eps(\Os)$. Since we prefer to work with uniformly integrable families, we instead consider a sequence of cut-off approximations $D_\eps^\beta$ to $D_\eps$. It will be very important to choose these cut-offs correctly, but for the right choice they \emph{will} be uniformly integrable (for each $\beta$) and moreover, will converge as $\eps\to 0$ (albeit in some slightly unusual sense, see Lemma \ref{lemma::conv_quotient}). Obtaining the desired convergence in Theorem \ref{proposition::convergence} then amounts to letting $\beta\to \infty$ and using Lemma \ref{lemma::minimum_particle} to see that $D_\eps^\beta$ is actually very close to $D_\eps$ for large enough $\beta$.

So, let us fix $\eps_0> 0$, such that $B(x,\eps)\subset D$ for every $\eps\leq \eps_0$ and $x\in \Os$. Then we define for $\beta>0$ and $\eps\in(0,\eps_0]$, the ``cut-off'' approximations
\begin{align*} M_\eps^\beta(\Os)&:=\int_{\Os} \e^{2h_{\eps}(x)-2\var{h_\eps(x)}} \I_{L_\eps(x)} \I_{\{-h_\eps(x)+2\var{h_\eps(x)}+\beta>1\}} \, dx; \; \text{and}\\
D^\beta_\eps(\Os) &:=\int_{\Os} (-h_\eps(x)+2\var{h_\eps(x)}+\beta)  e^{2h_{\eps}(x)-2 \var{h_\eps(x)}} \I_{L_\eps(x)} \I_{\{-h_\eps(x)+2\var{h_\eps(x)}+\beta>1\}} ;  \end{align*}
where \[L_\eps(x):=\{-\tilde{h}_\delta(x)+2\lambda_\eps(x) \var{\tilde{h}_\delta(x)} + \beta -\rho_\delta^\eps(x) >0 ; \, \forall \, \delta\in [\eps,\eps_0] \}.\] Note that both $M_\eps^\beta(\Os)$ and $D_\eps^\beta(\Os)$ are positive by definition, and also that $M_\eps^\beta(\Os)$ $\leq D_\eps^\beta(\Os)$. For ease of notation we set
\begin{align*}
& f_{\eps,\gamma}^\beta(x) = -h_\eps(x)+\gamma \var{h_\eps(x)}+\beta
& g_{\eps,\gamma}(x) = \gamma\he(x)-(\gamma^2/2) \var{\he(x)} \\  & \tilde{f}_{\eps, \gamma}^{\beta}(x)  =-\tilde{h}_\eps(x)+\gamma\var{\tilde{h}_\eps(x)}+\beta 
& \tilde{g}_{\eps,\gamma}(x) = \gamma\tihe(x)-(\gamma^2/2)\var{\tihe(x)}\\
& f_{\eps,\gamma}^{\beta,Y}(x) = -Y_\eps(x) + \gamma \var{Y_\eps(x)}+\beta 
& g_{\eps,\gamma}^Y(x)  =\gamma Y_\eps(x)-(\gamma^2/2) \var{Y_\eps(x)}
\end{align*}
recalling the definition of $Y$ from Lemma \ref{lemma::compare_average}. Then we have 
\begin{align*} & M_\eps^\beta(\Os):=\int_{\Os} \e^{g_{\eps,2}(x)} \I_{L_\eps(x)} \I_{\{f_{\eps,2}^\beta(x)>1\}} \, dx ;\\ 
& D^\beta_\eps(\Os) :=\int_{\Os} f_{\eps,2}^\beta(x)  e^{g_{\eps,2}(x)} \I_{L_\eps(x)} \I_{\{f_{\eps,2}(x)^\beta>1\}} \, dx \end{align*} and 
\[ L_\eps(x)= \{\tilde{f}_{\delta,2\lex(x)}^{\beta}(x)-\rho_\delta^\eps >0 \, \forall \delta\in [\eps,\eps_0]\}.\]
The decomposition 
\begin{equation}\label{eqn::fdecomp} f_{\eps,2}^\beta(\cdot)=\lex(\cdot)\tilde{f}_{\eps,2\lex(\cdot)}^\beta(\cdot)+f^{0,Y}_{\eps,2}(\cdot)+(1-\lex(\cdot))\beta \;\; \text{and} \;\; g_{\eps,2}(\cdot)=\tilde{g}_{\eps,2\lex(\cdot)}(\cdot)+g^Y_{\eps,2}(\cdot).\end{equation} will also come in very useful in what follows.

\begin{proposition}\label{lem::ui}
	For fixed $\beta>0$, $(D_\eps^\beta(\Os))_{\eps\leq \eps_0}$ is a uniformly integrable family.  
\end{proposition}

\begin{proof}
	The proof of this Lemma is inspired by that of Berestycki \cite{Ber}, who shows uniform integrability of $\mu_\eps^\gamma$ in the subcritical case. In analogy to his approach, for $a\ge\eps>0$ we define the \emph{good event} $G_{\eps,a}^R(x):=$ \[ \left\{\frac{\sqrt{\log(1/u)}}{R\log(2+\log(1/u))^2} \leq \tilde{f}_{u,2\lambda_\eps(x)}^{\beta}(x) \leq  R(1+\sqrt{\log(1/u)\log(1+\log(1/u))}) \;\; \forall u\in [\eps, a]\right\}\] 
	and write $D_\eps^\beta(\Os)=J_\eps^\beta + \hat{J}_\eps^\beta$, where $J_\eps^\beta$ is the integral over all ``good'' $x$, for which $G_{\eps,\eps_0}^R(x)$ holds. \footnote{Note that we are setting $a=\eps_0$ here, but we define the more general notation $G_{\eps,a}^R$ for use later on.} The rationale behind choosing $G$ in this way is that it separates bad points of the field, which are ``too thick" and make the second moment explode, from the good points. 
	
	To conclude, it is enough to prove the following two lemmas.
	
	\begin{lemma}\label{lem::uipart1}
		$\mathbb{E}[\hat{J}_\eps^\beta]\leq p(R)$ for all $ \eps\leq \eps_0$ where $p(R)\to 0$ as $R\to \infty;$
	\end{lemma}
	\begin{lemma}\label{lem::uipart2}
		For fixed $R, J_\eps^\beta \, \text{is uniformly bounded in} \, L^2.$
	\end{lemma}
	We first give a very rough idea of why these should hold:
	\begin{itemize}
		\item $\mathbb{E}[\hat{J}_\eps^\beta]$ corresponds to the probability of $G^R_{\eps,\eps_0}(x)$ \emph{not} holding under a weighted law: specifically, under the law with Radon--Nikodym derivative (with respect to $\mathbb{P}$) proportional to 
		\[f_{\eps,2}^\beta(x)\e^{g_{\eps,2}(x)}\I_{L_\eps(x)}\I_{\{f_{\eps,2}^\beta(x)>1\}}.\]
		Under this law we know that $\tilde{f}^\beta_{u,2\lambda_\eps(x)}(x)$ is (approximately) a Bessel process. Thus we know by Lemma \ref{lemma::bessel} that this probability tends to $0$ as $R\to \infty$. 
		\item Now we move on to the $L^2$ bound. Every time we write $\approx$ it requires a lot of justification, usually because $h_\eps$ is not exactly a Brownian motion. First note that by the Markov property of the field and the fact that (\ref{eqn::martingale}) is a martingale, \begin{align*} & \mathbb{E}[f_{\eps,2}^\beta(x)f_{\eps,2}^\beta(y)\e^{g_{\eps,2}(x)}\e^{g_{\eps,2}(y)}\I_{L_\eps(x)}\I_{L_\eps(y)}] \\ & \approx \mathbb{E}[f_{\delta,2}^\beta(x)f_{\delta,2}^\beta(y)\e^{g_{\delta,2}(x)}\e^{g_{\delta,2}(y)}\I_{L_\delta(x)}\I_{L_\delta(y)}] \end{align*}
		for $x,y\in \Os$, where $\delta=\delta(x,y)=(|x-y|/3)\vee \eps$. Now, on the event $G_{\eps,\eps_0}^R(x)\cap G_{\eps,\eps_0}^R(y)$, \[f_{\delta,2}^\beta(x) \approx \sqrt{\log(1/\delta)}, \;f_{\delta,2}^\beta(y) \approx \sqrt{\log(1/\delta)} \; \text{ and} \; g_{\delta,2}(y)\approx-2\sqrt{\log(1/\delta)}+2\log(1/\delta).\] We can use this to show that, roughly, 
		\begin{align*} \mathbb{E}[f_{\delta,2}^\beta(x)f_{\delta,2}^\beta(y)\e^{g_{\delta,2}(x)}\e^{g_{\delta,2}(y)}\I_{G_{\eps,\eps_0}^R(x)}\I_{G_{\eps,\eps_0}^R(y)}] &\lesssim  \delta^{-2}\log (1/\delta) \e^{-2\sqrt{\log(1/\delta)}} \mathbb{E}[\e^{g_{\delta,2}(x)}] \\ &= \delta^{-2}\log (1/\delta) \e^{-2\sqrt{\log(1/\delta)}}. \end{align*}
		We then only need to verify that this function of $\delta(x,y)$ is integrable over $\Os\times \Os$. 
	\end{itemize}
	We prove Lemmas \ref{lem::uipart1} and \ref{lem::uipart2} below. As already mentioned, there are several technical difficulties with making the above argument rigorous.

\end{proof}

\begin{proofof}{Lemma \ref{lem::uipart1}} Consider for $x\in \Os$
	\begin{equation}\label{eqn::exp_good_events} \E{f_{\eps,2}^\beta(x)e^{g_{\eps,2}(x)} \I_{L_\eps(x)}\I_{\{f_{\eps,2}^\beta(x)>1\}} \I_{G_{\eps, \eps_0}^R(x)^c}}.
	\end{equation}
	To prove the lemma, we need to show that this converges to $0$ as $R\to \infty$, uniformly in $\eps$ and $x$. The strategy is to rewrite it as an expectation with respect to a different measure, under which we understand well the behaviour of $\tilde{f}_{\eps,2\lambda_\eps}(x)$. We set
	\[ \frac{d\tilde{\mathbf{Q}}_x^{\beta,\eps}}{d \mathbb{P}} = (\tilde{Z}_
	\eps^\beta(x))^{-1} \tilde{f}_{\eps,2\lambda_\eps}^\beta(x)\e^{g_{\eps,2}(x)} \I_{L_\eps(x)}; \;\;\; \tilde{Z}_\eps^\beta(x) = \E{\tilde{f}_{\eps,2\lambda_\eps}^\beta(x)\e^{g_{\eps,2}(x)} \I_{L_\eps(x)}}.\] This measure will be extremely important throughout the paper because, under $\tilde{\mathbf{Q}}_x^{\beta,\eps}$, the process 
	\[\{\tilde{f}_{u,2\lambda_\eps}^\beta(x)-\rho_u^\eps(x); \,u\in [\eps,\eps_0]\}\] is a time changed 3d Bessel process. To see why this is true, we split the weighting that defines $\tilde{\mathbf{Q}}_x^{\beta,\eps}$ into two steps. By decomposition (\ref{eqn::fdecomp}) we have $g_{\eps,2}(x)=\tilde{g}_{\eps,2\lex}(x)+g_{\eps,2}^Y(x)$ and so we can first consider what happens if we only weight by $\exp(g_{\eps,2}^Y(x))$. Let us call this intermediate law $\hat{\mathbb{P}}$. By the Cameron--Martin--Girsanov theorem, and the definition $\rho_\delta^\eps(x):=-2\cov(\tilde{h}_\delta(x),Y_\eps(x))$, the process \[-\tilde{h}_\delta(x)-\rho_{\delta}^\eps(x)\] is a time changed Brownian motion under $\hat{\mathbb{P}}$.  For the second step in the weighting we use the definition of $L_\eps(x)$, and the fact that $\rho_\eps^\eps(x)=0$. This means that this second step is simply the Bessel process weighting described in Section \ref{sec::bessel}, with $\gamma=2\lex(x)$. The same argument also implies that $\tilde{Z}_\eps^\beta(x)$ does not depend on $\eps$ for each $x$, since (\ref{eqn::martingale}) is a martingale.
	
	To prove the lemma, and we will apply this technique over and over again, we rewrite  (\ref{eqn::exp_good_events}) as  
	\[ \tilde{Z}_\eps^\beta(x) \mathbf{\tilde{Q}}_x^{\beta,\eps} \left[ \frac{f_{\eps,2}^\beta(x)}{\tilde{f}_{\eps,2\lambda_\eps}^\beta(x)}\I_{\{f_{\eps,2}^\beta>1 \}} \I_{G^R_{\eps, \eps_0}(x)^c}
	\right] \] where by Lemma \ref{lemma::bessel} part (4) we know that $\tilde{\mathbf{Q}}_x^{\beta,\eps}(G_{\eps,\eps_0}^R(x)^c)\to 0$ as $R\to \infty$, uniformly in $\eps$ and $x$ (using the uniform boundedness of $(\rho_\delta^\eps(x))_{\delta>\eps}$.)
	
	Using the fact that \[f_{\eps,2}^\beta(x)=\lex(x)\tilde{f}_{\eps,2\lex(x)}^\beta(x)+\text{O}(1)-Y_\eps(x),\] (cf. decomposition (\ref{eqn::fdecomp})), and H\"{o}lder's inequality, it is  enough for us to show that
	\[\mathbf{\tilde{Q}}_x^{\beta,\eps}\left[\left(\frac{|Y_\eps(x)|+\text{O}(1)}{\tilde{f}_{\eps,2\lex}^\beta(x)}\right)^{3/2}\right]=\text{O}(1).\] However, this follows by Cauchy--Schwarz, since
	\[\mathbf{\tilde{Q}}_x^{\beta,\eps}\left[\left(\frac{|Y_\eps(x)|+\text{O}(1)}{\tilde{f}_{\eps,2\lex}^\beta(x)}\right)^{3/2}\right]^2\leq \mathbf{\tilde{Q}}_x^{\beta,\eps}\left[\frac{(|Y_\eps(x)|+\text{O}(1))^3}{\tilde{f}^\beta_{\eps,2\lex}(x)} \right]  \mathbf{\tilde{Q}}_x^{\beta,\eps}\left[\frac{1}{\tilde{f}^\beta_{\eps,2\lex}(x)^2} \right] \] and
	\begin{itemize} \item $\mathbf{\tilde{Q}}_x^{\beta,\eps}[(\tilde{f}_{\eps,2\lex}^\beta(x))^{-2}]$ is bounded by Lemma \ref{lemma::bessel}, part (2); 
		\item 
		$\mathbf{\tilde{Q}}_x^{\beta,\eps}[|Y_\eps(x)|^p/\tilde{f}_{\eps,2\lambda_\eps}^\beta(x)]\lesssim \mathbb{E}[|Y_\eps(x)|^p\e^{g_{\eps,2}(x)}]$ is bounded for $p=1,2,3$.
	\end{itemize}
\end{proofof}
\medbreak

\begin{proofof}{Lemma \ref{lem::uipart2}} First, we make the simple bound 
	\begin{equation} \label{eqn::je2} \mathbb{E}[(J_\eps^\beta)^2]\leq \iint_{\Os^2} \E{
		|f_{\eps,2}^\beta(x)||f_{\eps,2}^\beta(y)|\e^{g_{\eps,2}(x)}\e^{g_{\eps,2}(y)}\I_{L_\eps(x)}\I_{L_\eps(y)}\I_{G^R_{\eps,\eps_0}(x)}\I_{G^R_{\eps,\eps_0}(y)}} \, dy \, dx  \end{equation}
	and fix some $x\in \Os$. For this fixed $x$, we will break the integral over $y$ into two parts: those with $|x-y|>3\eps$, and those with $|x-y|\leq 3\eps$. Let us begin with the first case. For such a $y$, we set $\delta=\delta(x,y):=|x-y|/3$, so that the $\delta$-balls around $x$ and $y$ are disjoint. We are going to use the fact that the circle averages around $x$ and $y$ decorrelate after this time. More precisely, if we let $\mathcal{H}$ be the $\sigma$-algebra generated by $h|_{D\setminus (B(x,\delta)\cup B(y,\delta))}$, then we have the following observations, which we state as a lemma.
	\begin{lemma}
		\label{lem::uiaugpart2} 
		\begin{enumerate}[(1)]
			\item 
			Conditionally on $\mathcal{H}$, the processes $(\tilde{h}_{\delta\e^{-t}}(x)-\tilde{h}_\delta(x))_{t\geq 0}$ and $(\tilde{h}_{\delta \e^{-t}}(y)-\tilde{h}_\delta(y))_{t\geq 0})$ are independent Brownian motions. 
			\item We can write $Y_\eps(x)=Y_\eps^1(x)+Y_\eps^2(x)$ and $Y_\eps(y)=Y_\eps^1(y)+Y_\eps^2(y)$  where:
			\begin{itemize}
				\item $Y_\eps^1(x)$ and $Y_\eps^1(y)$ are measurable with respect to $\mathcal{H}$;
				\item $Y_\eps^2(x)$ is independent of $\mathcal{H}$, $Y_\eps^2(y)$ and $(\tilde{h}_\eta(y)-\tilde{h}_\delta(y))_{\eta\leq \delta}$; 
				\item $Y_\eps^2(y)$ is independent of $\mathcal{H}$, $Y_\eps^2(x)$ and $(\tilde{h}_\eta(x)-\tilde{h}_\delta(x))_{\eta\leq \delta}$;
				\item $Y_\eps^i(x),Y_\eps^i(y)$ for $i=1,2$ have bounded variance; and 
				\item $2\cov(Y_\eps^2(x),\tilde{h}_\eta(x)-\tilde{h}_\delta(x))=-\rho^\eps_{\eta}(x)+\rho^\eps_{\delta}(x)$ (similarly if $x$ is replaced with $y$).
			\end{itemize} 
			\item We have \begin{align*}\mathbb{E}[\tilde{f}_{\eps,2\lambda_\eps}^\beta(x)\e^{g_{\eps,2}(x)}\I_{L_\eps(x)}\mid \mathcal{H}]  =(&\tilde{f}_{\delta,2\lambda_\eps}^\beta -\rho_\delta^\eps(x))\I_{\{\tilde f_{\eta,2\lambda_\eps} -\rho_\eta^\eps(x) >0 ; \, \forall \, \eta\in [\delta,\eps_0]\}}\\  \times &\e^{\tilde{g}_{\delta,2\lex}(x)}\e^{2Y^1_\eps(x)-2\text{\emph{var}}(Y^1_\eps(x))}\e^{2\rho_\delta^\eps(x)}.\end{align*}
			\item We also have
			$\mathbb{E}[\,|Y_\eps^2(x)|\e^{g_{\eps,2}(x)}\I_{L_\eps(x)}\mid \mathcal{H}]\leq C\e^{\tilde{g}_{\delta,2\lex}(x)}\e^{2Y^1_\eps(x)}$ where $C$ is a universal constant. 
			\item Items (3) and (4) also hold when $x$ is replaced by $y$.
		\end{enumerate}
	\end{lemma}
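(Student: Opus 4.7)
\begin{proofsketch}
The plan is to exploit the spatial Markov property of the GFF with respect to the closed set $A := \overline{D \setminus (B(x,\delta) \cup B(y,\delta))}$. Writing $h = h^A + h_A$, the field $h^A$ is an independent zero-boundary GFF on $B(x,\delta) \sqcup B(y,\delta)$ (so its restrictions to the two balls are themselves independent), while $h_A$ is $\mathcal{H}$-measurable and harmonic on each ball. All five parts of the lemma then reduce to relatively direct computations using this single decomposition.

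For (1), since $h_A$ is harmonic, the mean-value property gives $\tilde{h}_\eta(x) - \tilde{h}_\delta(x) = \tilde{h}^A_\eta(x)$ for $\eta \leq \delta$ (using also that $\tilde{h}^A_\delta(x) = 0$ since $h^A$ vanishes on $\partial B(x,\delta)$), which by Lemma \ref{lemma::circle_average_bm} applied inside each ball is a Brownian motion in $t$ where $\eta = \delta e^{-t}$; the two processes are independent since the relevant restrictions of $h^A$ are. For (2), set $Y^1_\eps(x) := (h_A)_\eps(x) - \lex(x) (\widetilde{h_A})_\eps(x)$ and $Y^2_\eps(x) := (h^A)_\eps(x) - \lex(x) (\widetilde{h^A})_\eps(x)$ (and analogously at $y$). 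Since $|x-y| > 3\eps$, both the support of the mollifier $\theta_{\eps,x}$ and the circle $\partial B(x,\eps)$ lie inside $B(x,\delta)$, so $Y^2_\eps(x)$ depends only on $h^A|_{B(x,\delta)}$; the independence of $h^A$ on the two balls and from $\mathcal{H}$ yields all claimed independence properties. The variance bounds follow from Lemma \ref{lemma::compare_average}, and the covariance identity follows because $\tilde{h}_\eta(x) - \tilde{h}_\delta(x)$ is a function of $h^A$ alone (hence orthogonal to the $\mathcal{H}$-measurable $Y^1_\eps(x)$), reducing it to Lemma \ref{rmk::covYyhx}.

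The main work is in (3). I will split $L_\eps(x) = L^{\geq \delta}_\eps(x) \cap L^{<\delta}_\eps(x)$ at scale $\delta$, where the first factor is $\mathcal{H}$-measurable and pulls out. Setting $B(t) := \tilde{h}^A_{\delta e^{-t}}(x)$ and $T := \log(\delta/\eps)$, the strategy is to rewrite the inner integrand entirely in terms of $(B, Y^2_\eps(x))$ and then perform a Cameron--Martin shift in $Y^2_\eps(x)$ to eliminate the factor $e^{2Y^2_\eps(x) - 2\var{Y^2_\eps(x)}}$. Using $2\cov(Y^2_\eps(x), B(t)) = -\rho'(t)$ with $\rho'(t) := \rho^\eps_{\delta e^{-t}}(x) - \rho^\eps_\delta(x)$ (so $\rho'(T) = -\rho^\eps_\delta(x)$), this shift replaces $B$ by a standard Brownian motion $\tilde B(t) = B(t) + \rho'(t)$ under a new measure, and the integrand becomes exactly the Bessel martingale \eqref{eqn::martingale} (with $\gamma = 2\lex$ and starting value $a := \tilde f^\beta_{\delta, 2\lex}(x) - \rho^\eps_\delta(x)$) multiplied by a prefactor $e^{2\lex \rho^\eps_\delta(x)}$. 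Taking expectation then yields $a$ times this prefactor, matching the claim. The subtle point is that $L_\eps(x)$ has been defined (with the subtracted $\rho^\eps_u$) precisely so that the indicator $\I_{L^{<\delta}_\eps(x)}$ in the shifted picture becomes the standard Bessel positivity event $\{-\tilde B(t) + 2\lex t + a > 0 \; \forall t \in [0, T]\}$.

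For (4), since there is no $\tilde f^\beta_{\eps, 2\lex}$ factor, I plan simply to drop $\I_{L^{<\delta}_\eps(x)} \leq 1$ and do a direct joint-Gaussian computation of $\mathbb{E}[|Y^2_\eps(x)|\, e^{2\lex B(T) - 2\lex^2 T + 2Y^2_\eps(x) - 2\var{Y^2_\eps(x)}}]$; after the same Cameron--Martin shift this equals $e^{2\lex \rho^\eps_\delta(x)}\, \mathbb{E}[|N + 2\var{Y^2_\eps(x)}|]$ with $N$ a centred Gaussian of variance $O(1)$, which is uniformly bounded, and absorbing $e^{-2\var{Y^1_\eps(x)}} = O(1)$ yields the stated universal constant $C$. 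Part (5) is immediate by the symmetric roles of $x$ and $y$ throughout the above argument. The main technical obstacle is the careful bookkeeping of the Cameron--Martin shifts and the precise role of the functions $\rho^\eps_u(x)$; once these cancellations are tracked correctly, everything reduces cleanly to the Bessel martingale \eqref{eqn::martingale}.
\end{proofsketch}
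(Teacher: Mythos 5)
Your proposal is correct and follows essentially the same route as the paper: the identical Markov-property decomposition $h=h^A+h_A$ with the same definitions of $Y^1_\eps$ and $Y^2_\eps$, the same splitting of $L_\eps(x)$ at scale $\delta$, and the same Cameron--Martin/Girsanov shift reducing part (3) to the martingale \eqref{eqn::martingale} and part (4) to a shifted Gaussian expectation. Your careful tracking of the $\rho^\eps_\cdot(x)$ cancellations (including the prefactor $\e^{2\lex\rho_\delta^\eps(x)}$) matches the paper's own computation.
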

	\begin{proofof}{Lemma \ref{lem::uiaugpart2}}
		By the Markov property of the Gaussian free field, conditionally on $\mathcal{H}$ we can write $h=h^{\mathcal{H}}+h_{\mathcal{H}}$ where: 
		\begin{itemize}
			\item	
			$h_{\mathcal{H}}$ is measurable with respect to $\mathcal{H}$ and harmonic when restricted to $B(x,\delta)\cup B(y,\delta)$; and
			\item $h^{\mathcal{H}}$, independent of $\mathcal{H}$, is a sum of two independent zero boundary GFFs: one in $B(x,\delta)$ and one in $B(y,\delta)$.
		\end{itemize} We use this to prove the points in turn.
		\medbreak
		
		(1) This follows from the fact that $h_\mathcal{H}(x)=\tilde{h}_\delta(x)$ and $h_\mathcal{H}(y)=\tilde{h}_\delta(y)$ (by harmonicity), and the fact that the circle average process of a Gaussian free field is a Brownian motion.
		\medbreak
		
		(2) We have $Y_\eps(x)=h_\eps(x)-\lex(x)\tilde{h}_\eps(x)$ by definition and so we can write $Y_\eps^1(x)=(h_{\mathcal{H}},\theta_{\eps,x})-\lex(x) (h_{\mathcal{H}},\tilde{\theta}_{\eps,x})$ and $Y_\eps^2=(h^{\mathcal{H}},\theta_{\eps,x})-\lex(x) (h^{\mathcal{H}},\tilde{\theta}_{\eps,x})$, where $\tilde{\theta}$ is uniform measure on the unit circle. The claimed properties of this decomposition are easy to see.
		\medbreak
		
		(3)	We first take out the $\mathcal{H}$-measurable parts from the conditional expectation on the left-hand side. To this end we write for $\eta<\delta$
		\begin{align*} \tilde{f}_{\eta,2\lex}^\beta(x) &= \tilde{f}_{\delta,2\lex}^\beta(x)-\rho_\delta^\eps - (\tilde{h}_\eta(x)-\tilde{h}_\delta(x)) +2\lambda_\eps\log(\delta/\eta) +\rho_{\delta}^\eps\\
		&	:= W - (\tilde{h}_\eta(x)-\tilde{h}_\delta(x)) +2\lambda_\eps\log(\delta/\eta) +\rho_{\delta}^\eps\end{align*}
		where $W=\tilde{f}^\beta_{\delta,2\lambda_\eps}(x)-\rho_{\delta}^\eps$ is $\mathcal{H}$ measurable. We can also write 
		\begin{align*}& L_\eps(x) = L_\eps^1 \cap L_\eps^2 \\ & :=  \{\tilde{f}_{\eta,2\lex}^\beta(x)-\rho_\eta^\eps>0 \, \forall \, \eta\in [\delta,\eps_0]\} \cap \{-(\tilde{h}_\eta(x)-\tilde{h}_\delta(x))-(\rho_\eta^\eps-\rho_\delta^\eps) + W>0\, \forall \, \eta\in [\eps,\delta]\} 	
		\end{align*}
		
		where $L_\eps^1$ is also $\mathcal{H}$-measurable. Putting these together, using that $\rho_{\eps}^\eps=0$ and breaking up $g_{\eps,2}(x)$ using (\ref{eqn::fdecomp}) and point (2), we see that 
		\begin{align*} \mathbb{E}[&\tilde{f}_{\eps,2\lambda_\eps}^\beta(x)\e^{g_{\eps,2}(x)}\I_{L_\eps(x)}\mid \mathcal{H}] =\e^{2\lambda_\eps\rho_\delta^\eps(x)}\e^{\tilde{g}_{\delta,2\lex}(x)}\e^{2Y_\eps^1(x)-2\var{Y_\eps^1(x)}} \I_{L_\eps^1(x)} \;\; \times \\
		  \mathbb{E}[& (W-(\tilde{h}_\eps(x)-\tilde{h}_\delta(x))+2\lambda_\eps\log(\delta/\eps)+\rho_\delta^\eps)\e^{2\lambda_\eps(\tilde{h}_\eps(x)-\tilde{h}_\delta(x))-2\lambda_\eps^2\log(\delta/\eps)-2\lambda_\eps\rho_\delta^\eps(x)} \\ 
		 & \e^{2Y_\eps^2(x)-2\var{Y_\eps^2(x)}} \I_{L_\eps^2(x)}\mid \mathcal{H}]\end{align*}
		Now we can use Girsanov's theorem, as in the proof of Lemma \ref{lem::uipart1}, to get rid of the $\exp\{2Y_\eps^2(x)-2\var{Y_\eps^2(x)}\}$ term. More precisely, changing measure by $\exp\{2Y_\eps^2(x)-2\var{Y_\eps^2(x)}\}$ has the effect of shifting the law of $(\tilde{h}_\eta(x)-\tilde{h}_\delta(x))_{\eta\in[\eps,\delta]}$ by adding on the deterministic function $\rho_\delta^\eps(x)-\rho_\eta^\eps(x)$. We then see that the conditional expectation above is nothing but the expectation of the Brownian motion martingale (\ref{eqn::martingale}), starting from $W$. The result follows. 
		\medbreak
		
		(4) For this we bound the indicator $\I_{L_\eps(x)}$ by 1, and take out the parts which are measurable with respect to $\mathcal{H}$ as in part (3). Then we are left with the expectation of $|Y_\eps^2(x)|$ under a shifted law, where $Y_\eps^2(x)$ is still a Gaussian with $\text{O}(1)$ mean and variance (since $Y_\eps^2(x)$ has bounded covariances with everything.) This proves the claim.

	\end{proofof}
	\medbreak
	
	This lemma allows us to deduce that the integrand of (\ref{eqn::je2}), in the case $|x-y|>3\eps$, is less than or equal to some constant, depending on $\beta$ only, times 
	\begin{align*}\label{eqn::bound_L2} \mathbb{E}\big[&(\tilde{f}_{\delta,2\lambda_\eps(x)}^\beta(x)+1+|Y_\eps^1(x)|)
	(\tilde{f}_{\delta,2\lambda_\eps(y)}^\beta(y)+1+|Y_\eps^1(y)|)  \\
	& \times \e^{\tilde g_{\delta,2\lambda_\eps(x)}(x)}\e^{\tilde g_{\delta,2\lambda_\eps(y)}(y)}
	\e^{2Y_\eps^1(x)}\e^{2Y_\eps^1(y)}\I_{G^R_{\delta,\delta}(x)}\I_{G^R_{\delta,\delta}(y)}
	\big]
	\end{align*}
	where $\delta=\delta(x,y)=|x-y|/3$. Here we used that $\rho_\delta^\eps(\cdot)$ and $\var{Y_\eps^1(\cdot)}$ are uniformly bounded, and changed $G_{\eps,\eps_0}^R(\cdot)$ to the larger $\mathcal{H}$-measurable event $G_{\delta,\delta}^R(\cdot)$, so that it would not interfere with the conditioning step.
	
	Now we can use the definition of $G^R_{\delta,\delta}$. This, together with the fact that $Y_\eps^1(\cdot)$ has bounded variance and covariance with everything, tells us that the above is bounded by a constant times 
	\[ \delta^{-2} F_{R}(\log(1/\delta)) \; ; \;\; F_{R}(z):= R^2(1+\sqrt{z\log(1+z)})^2 \e^{-\frac{2\sqrt{z}}{{R}(\log(2+z))}}. \]
	As in the sketch of this proof (given just after the statement of Lemmas \ref{lem::uipart1} and \ref{lem::uipart2}) we have put deterministic bounds on $\tilde{f}_{\delta,2\lex(x)}^\beta(x)$, $\tilde{f}^\beta_{\delta,2\lex(y)}(y)$ and $\e^{\tilde{g}_{\delta,2\lex(y)}(y)}$, and integrated over $\e^{\tilde{g}_{\delta,2\lex(x)}(x)}$.

	Hence we can bound the integral (\ref{eqn::je2}), restricted to the set $x\in \Os$, $y\in \Os\setminus B(x,3\eps)$, by a multiple of  
	\[ \int_{x\in \Os}\int_{y\notin B(x,3\eps)} \frac{1}{|x-y|^2} F_{R}(-\log|x-y|) \, dy \, dx \leq C \int_{x\in \Os}\int_{0}^{\log(1/\eps)} F_{R}(u) du \, dx.  \]
	Since $F_{R}$ is integrable we see that this is uniformly bounded in $\eps$. 
	
	Finally, we must deal with the integral over the set $x\in \Os$, $y\in B(x,3\eps)$. By the same reasoning as above (although now we do not need to do any conditioning, since $\delta(x,y)=\eps$) we see that the integrand on this region is less than some constant times $\eps^{-2}F_{R}(\log(1/\eps))$. That the integral is uniformly bounded in $\eps$ then follows from that fact that $F_{R}(\log(1/\eps))$ is bounded, and that the area of $B(x,3\eps)$ is $\text{O}(\eps^2)$.	\end{proofof}

\subsection{Convergence}\label{sec::convergence}

We now need to show that $D^\beta_\eps(\Os)$ converges (in some sense) as $\eps\to 0$. To do this, we define the change of measure 
\begin{equation}\label{eqn::com} \frac{d\mathbb{Q}^{\beta,\eps}}{d\mathbb{P}}= \frac{D_\eps^\beta(\Os)}{\mathbb{E}[D_\eps^\beta(\Os)]}\end{equation}
for each $\eps>0$. Note that this is not a martingale change of measure, but it is well defined for each $\eps>0$. We will prove the following lemma (from now on we drop the dependence on $\Os$ from our notation for compactness.)

\begin{proposition}\label{lemma::conv_quotient}
	For each fixed $\beta$ and $\eps_0$, and for any $\delta>0$
	\[ \mathbb{Q}^{\beta,\eps} \left[\left|\frac{M_\eps^\beta}{D_\eps^\beta}\sqrt{\log(1/\eps)} - \sqrt{\frac{2}{\pi}} \right|>\delta\right] \to 0  \]
	as $\eps\to 0$. 
\end{proposition}

\begin{remark}
	Since $D_\eps^\beta$ and $M_\eps^\beta$ are close to $D_\eps$ and $M_\eps$ for large $\beta$ (Lemma \ref{lemma::minimum_particle}) and $\mathbb{Q}^{\beta,\eps}$ is defined by a uniformly integrable change of measure (Proposition \ref{lem::ui}) this is almost exactly what we need (recall that by Theorem \ref{theorem::critical_sh_gff} we have $M_\eps \sqrt{\log(1/\eps)}\to \sqrt{\pi/2}\mu'$ as $\eps\to 0$.) Indeed, we will see that the proof of Theorem \ref{proposition::convergence} follows in a straightforward manner once we have completed the proof of Proposition \ref{lemma::conv_quotient}.
\end{remark}

\begin{remark}
	The proof of Proposition \ref{lemma::conv_quotient} follows the general outline of the main proof in \cite{AiSh}. However, the details of each step are somewhat different, and rely on the precise way we have constructed $D_\eps^\beta$. One of the main difficulties is to make exact statements about the behaviour of $h_\eps$ using what we know about the behaviour of $\tilde{h}_\eps$. 
\end{remark}

Before starting the proof, we make a few remarks about the change of measure (\ref{eqn::com}). Define
\[ \hat{\mathbb{Q}}^{\beta,\eps}(dx,dh) = \frac{f_{\eps,2}^\beta(x)\e^{g_{\eps,2}(x)}\I_{\{x\in\Os\}}\I_{L_\eps(x)}\I_{\{f_{\eps,2}^\beta(x)>1\}} dx \, \mathbb{P}[dh]}{\mathbb{E}[D_\eps^\beta]} \]
to be the \emph{rooted measure} on $(h,x)$ where $h$ is a field and $x$ is a point in $\Os$. Introducing this type of measure is a classical tool for dealing with branching processes, that also comes in very useful in the context of Gaussian multiplicative chaos. We have the following description of how the point $x$ and the field $h$ interact under $\hat{\mathbb{Q}}^{\beta,\eps}$:
\begin{itemize}
	\item the marginal law of $h$ under $\hat{\mathbb{Q}}^{\beta,\eps}$ is $\mathbb{E}[D_\eps^\beta]^{-1} D_\eps^\beta d\mathbb{P}$ (i.e. the same law as under $\mathbb{Q}^{\beta,\eps}$);
	\item the marginal law of $x$ under $\hat{\mathbb{Q}}^{\beta,\eps}$, that we shall call $dm^{\beta,\eps}(x)$, is proportional to \[Z_\eps^\beta(x):= \I_{\{x\in\Os\}}\mathbb{E}[f_{\eps,2}^\beta(x)\e^{g_{\eps,2}(x)}\I_{L_\eps(x)}\I_{\{f_{\eps,2}^\beta(x)>1\}}].\]
	\item the conditional law of the field $h$ given the point $x$ is given by \[ \mathbf{Q}_{x}^{\beta,\eps}:=\hat{\mathbb{Q}}^{\beta,\eps}(\cdot \mid x)= (Z_\eps^\beta(x))^{-1} f_{\eps,2}^\beta(x)\e^{g_{\eps,2}(x)}\I_{\{x\in\Os\}}\I_{L_\eps(x)}\I_{\{f_{\eps,2}^\beta(x)>1\}}  \, d\mathbb{P}. \]
	\item the conditional law of the point $x$ given the field $h$ is proportional to \[f_{\eps,2}^\beta(x)\e^{g_{\eps,2}(x)}\I_{\{x\in\Os\}}\I_{L_\eps(x)}\I_{\{f_{\eps,2}^\beta(x)>1\}} dx.\]
\end{itemize}
Also note that
\[\frac{d\mathbf{Q}_x^{\beta,\eps}}{d\mathbf{\tilde{Q}}_x^{\beta,\eps}}=\frac{\tilde{Z}^\beta_\eps(x)}{Z^\beta_\eps(x)}\frac{f_{\eps,2}^\beta(x)}{\tilde{f}_{\eps,2\lambda_\eps}^\beta(x)}\I_{{\{f_{\eps,2}^\beta(x)\geq 1}\}}
\]
where we recall from the proof of Lemma \ref{lem::uipart1} that under $\mathbf{\tilde{Q}}_x^{\beta,\eps}$ the process \[\{(\tilde{f}_{e^{-u},2\lambda_\eps}^\beta(x)-\rho_{e^{-u}}^\eps(x)\,;\, u\in[\log(1/\eps_0),\log(1/\eps)]\}\] has the law of a 3d Bessel process, whose starting point is also biased (and a.s. positive.)
In fact, one of the key ideas in the proof of Proposition \ref{lemma::conv_quotient} will be to say that $f_\eps^\beta(x)$ under $\mathbf{Q}_x^{\beta,\eps}$ also behaves essentially like a Bessel process. As a warm up, let us first prove the following:

\begin{lemma}
	\begin{equation}\label{eqn::Z_ratio} \frac{Z_\eps^\beta(x)}{\tilde{Z}_\eps^\beta(x)}\to 1\end{equation}
	uniformly in $x$ as $\eps\to 0$.  
\end{lemma}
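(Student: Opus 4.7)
\begin{proofsketch}
From the definition of $\mathbf{\tilde{Q}}_x^{\beta,\eps}$,
\[ \frac{Z_\eps^\beta(x)}{\tilde{Z}_\eps^\beta(x)} = \mathbf{\tilde{Q}}_x^{\beta,\eps}\left[\frac{f_{\eps,2}^\beta(x)}{\tilde{f}_{\eps,2\lambda_\eps}^\beta(x)}\I_{\{f_{\eps,2}^\beta(x) > 1\}}\right], \]
so the plan is to show this tends to $1$ uniformly in $x$. I would split $\I_{\{f_{\eps,2}^\beta > 1\}} = 1 - \I_{\{f_{\eps,2}^\beta \leq 1\}}$ and use the decomposition (\ref{eqn::fdecomp}) to write
\[ \frac{f_{\eps,2}^\beta(x)}{\tilde{f}_{\eps,2\lambda_\eps}^\beta(x)} = \lambda_\eps(x) + \frac{f_{\eps,2}^{0,Y}(x) + (1-\lambda_\eps(x))\beta}{\tilde{f}_{\eps,2\lambda_\eps}^\beta(x)}. \]

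For the main piece $\mathbf{\tilde{Q}}_x^{\beta,\eps}[f_{\eps,2}^\beta(x)/\tilde{f}_{\eps,2\lambda_\eps}^\beta(x)]$, this decomposition gives $\lambda_\eps(x)$ plus the $\mathbf{\tilde{Q}}_x^{\beta,\eps}$-expectation of the second fraction. I would bound the latter in absolute value by $\mathbf{\tilde{Q}}_x^{\beta,\eps}[(|Y_\eps(x)| + \text{O}(1))/\tilde{f}_{\eps,2\lambda_\eps}^\beta(x)]$, which by Cauchy--Schwarz is at most
\[ \mathbf{\tilde{Q}}_x^{\beta,\eps}\left[\frac{(|Y_\eps(x)|+\text{O}(1))^2}{\tilde{f}_{\eps,2\lambda_\eps}^\beta(x)}\right]^{1/2} \mathbf{\tilde{Q}}_x^{\beta,\eps}\left[\frac{1}{\tilde{f}_{\eps,2\lambda_\eps}^\beta(x)}\right]^{1/2}. \]
The first factor is $\text{O}(1)$ by the same Gaussian moment estimate as in the final display of the proof of Lemma \ref{lem::uipart1}, and the second is $\text{O}((\log(1/\eps))^{-1/4})$ by Lemma \ref{lemma::bessel}(1) applied to the Bessel representation of $\tilde{f}^\beta$ under $\mathbf{\tilde{Q}}_x^{\beta,\eps}$ (whose elapsed time is of order $\log(1/\eps)$). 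Combined with $\lambda_\eps(x) = 1 + \text{O}(1/\log(1/\eps))$ from Lemma \ref{lemma::compare_average}, this shows the main piece equals $1 + \text{o}(1)$ uniformly in $x$.

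For the remainder $\mathbf{\tilde{Q}}_x^{\beta,\eps}[(f_{\eps,2}^\beta/\tilde{f}_{\eps,2\lambda_\eps}^\beta)\I_{\{f_{\eps,2}^\beta \leq 1\}}]$, I would bound the integrand in absolute value by $1/\tilde{f}_{\eps,2\lambda_\eps}^\beta + C(1+|Y_\eps|)/\tilde{f}_{\eps,2\lambda_\eps}^\beta$. This comes from splitting the event $\{f_{\eps,2}^\beta \leq 1\}$ into $\{0 \leq f_{\eps,2}^\beta \leq 1\}$ (on which $|f_{\eps,2}^\beta|/\tilde{f}^\beta \leq 1/\tilde{f}^\beta$) and $\{f_{\eps,2}^\beta < 0\}$ (on which decomposition (\ref{eqn::fdecomp}) forces $\lambda_\eps\tilde{f}^\beta \leq |Y_\eps| + \text{O}(1)$, hence $|f_{\eps,2}^\beta|/\tilde{f}^\beta \leq C(1+|Y_\eps|)/\tilde{f}^\beta$ for $\eps$ small). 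The $\mathbf{\tilde{Q}}_x^{\beta,\eps}$-expectation of the first term is $\text{o}(1)$ by Lemma \ref{lemma::bessel}(1), and of the second by the same Cauchy--Schwarz estimate as above, so the whole remainder is $\text{o}(1)$ uniformly in $x$.

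The only real obstacle, as in the proof of Lemma \ref{lem::uipart1}, is obtaining the Gaussian moment bound $\mathbf{\tilde{Q}}_x^{\beta,\eps}[|Y_\eps(x)|^p/\tilde{f}_{\eps,2\lambda_\eps}^\beta(x)] = \text{O}(1)$ uniformly in $x$ and $\eps$; this is handled there by writing out the change of measure via the Girsanov shift defining $\hat{\mathbb{P}}$ and exploiting the bounded covariances recorded in Lemma \ref{rmk::covYyhx}. Putting the two pieces together gives $Z_\eps^\beta(x)/\tilde{Z}_\eps^\beta(x) = 1 + \text{o}(1)$ uniformly in $x$.
\end{proofsketch}
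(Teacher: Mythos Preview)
Your argument is correct and follows essentially the same route as the paper: both express the ratio as $\mathbf{\tilde{Q}}_x^{\beta,\eps}[(f_{\eps,2}^\beta/\tilde{f}_{\eps,2\lambda_\eps}^\beta)\I_{\{f_{\eps,2}^\beta>1\}}]$, use the decomposition (\ref{eqn::fdecomp}) to write $f/\tilde f = \lambda_\eps + (\text{O}(1)-Y_\eps)/\tilde f$, and dispatch the second term by the Cauchy--Schwarz estimate $\mathbf{\tilde{Q}}[(|Y_\eps|+\text{O}(1))^2/\tilde f]^{1/2}\mathbf{\tilde{Q}}[1/\tilde f]^{1/2}=\text{O}(1)\cdot\text{O}((\log(1/\eps))^{-1/4})$. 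The only difference is in treating the contribution from $\{f_{\eps,2}^\beta\leq 1\}$: the paper proves separately that $\mathbf{\tilde{Q}}_x^{\beta,\eps}[f_{\eps,2}^\beta\leq 1]\to 0$ via an exponential tail bound on $Y_\eps$ under $\mathbf{\tilde{Q}}_x^{\beta,\eps}$ (the estimate (\ref{eqn::yprobsqtilde}), which is reused later in the proof of Proposition \ref{lemma::conv_quotient}), whereas you bound the remainder integral directly by splitting on the sign of $f_{\eps,2}^\beta$ and reusing the same Cauchy--Schwarz estimate, which is slightly more economical here but does not yield (\ref{eqn::yprobsqtilde}) as a by-product.
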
This justifies in some sense that the measures $\mathbf{Q}_x^{\beta,\eps}$ and $\mathbf{\tilde{Q}}_x^{\beta,\eps}$ are similar for small $\eps$, and is a result we will use many times.

\begin{proof} We consider the ratio \begin{align*}Z_\eps^\beta(x)/\tilde Z_\eps^\beta(x) & =(\tilde{Z}_\eps^\beta)^{-1} \mathbb{E}[f_{\eps,2}^\beta(x)\e^{g_{\eps,2}(x)}\I_{L_\eps(x)}\I_{\{f_{\eps,2}^\beta(x)> 1\}}] \\ &= \mathbf{\tilde Q}_{x}^{\beta,\eps}\left[ (f_{\eps,2}^\beta(x)/\tilde{f}_{\eps,2\lambda_\eps}^\beta(x)) \I_{\{f_{\eps,2}^\beta(x)>1\}}\right].\end{align*} To show this converges to 1 we write, using decomposition (\ref{eqn::fdecomp}),
	\[ \frac{f_{\eps,2}^\beta(x)}{\tilde{f}_{\eps,2\lex}^\beta(x)} = 1+\text{o}(1)+\frac{\text{O}(1)- Y_\eps(x)}{\tilde{f}_{\eps,2\lex}^\beta(x)}.\] Then by exactly the same Cauchy--Schwarz argument as in the proof of Lemma \ref{lem::uipart1}, it is enough to show that  \begin{equation} \label{eqn::probto0}\mathbf{\tilde{Q}}_x^{\beta,\eps}[f_{\eps,2}^\beta(x)\leq 1]\to 0\end{equation} uniformly in $x$. Since $\tilde{f}_{\eps,2\lex}^\beta(x)$ is close to $\sqrt{\log(1/\eps)}$ with high probability under $\tilde{\mathbf{Q}}_x^{\beta,\eps}$, and $f^\beta_{\eps,2}(x)=\lex(x)\tilde{f}^\beta_{\eps,2\lex}(x)+\text{O(1)} + Y_\eps(x)$, it is sufficient to control the tails of $Y_\eps(x)$ under $\tilde{\mathbf{Q}}_x^{\beta,\eps}$. For this we observe that, by Cauchy--Schwarz again,
	\begin{equation}\label{eqn::yprobsqtilde} \mathbf{\tilde Q}_x^{\beta,\eps}[\I_{\{Y_\eps(x)>a\}}]^2\leq (\tilde{Z}_\eps^\beta(x))^{-1}\mathbf{\tilde{Q}}_x^{\beta,\eps}[\tilde{f}^\beta_{\eps,2\lambda_\eps}(x)]\, \mathbb{E}[\I_{\{Y_\eps(x)>a\}}\e^{\tilde{g}_{\eps,2\lambda_\eps}(x)}\e^{g_{\eps,2}^Y(x)}]\leq \log(1/\eps)\e^{-ka}
	\end{equation}
	for some $k$, since $Y_\eps(x)$ is Gaussian with bounded variance under $\mathbb{P}$, $\tilde{f}_{\eps,2\lambda_\eps}(x)^\beta$ has the law of a 3d-Bessel process at time $\log(1/\eps)$ under $\tilde{\mathbf{Q}}_x^{\beta,\eps}$, and $\tilde{Z}_\eps^\beta(x)$ is bounded below by a constant times the probability that a Brownian motion stays above $-\beta+1$ up to time $\log(1/\eps)$. This allows us to conclude.
\end{proof}

\begin{proofof}{Proposition \ref{lemma::conv_quotient}} Our strategy to prove Proposition \ref{lemma::conv_quotient} is to show the following two things:
	\begin{equation}
	\label{eqn::firstmoment} \mathbb{Q}^{\beta,\eps}\left[\frac{M_\eps^\beta}{D_\eps^\beta}\right]=\sqrt{\frac{2}{\pi \log(1/\eps)}}+\text{o}\left(\frac{1}{\sqrt{\log(1/\eps)}}\right)\;\text{as}\; \eps\to 0;\; \text{and}\end{equation} 
	\begin{equation}\label{eqn::secondmoment}\mathbb{Q}^{\beta,\eps}\left[\left(\frac{M_\eps^\beta}{D_\eps^\beta}\right)^2\right]\leq\frac{2}{\pi \log(1/\eps)}+\text{o}\left(\frac{1}{\log(1/\eps)}\right) \;\text{as}\; \eps\to 0.\end{equation}

	The result then follows using Jensen's and Markov's inequalities. (\ref{eqn::firstmoment}) is relatively straightforward. Observe that, by the discussion preceeding this proof, we have 
	\begin{equation}
	\label{eqn::cond_viewpoint} \frac{M_\eps^\beta}{D_\eps^\beta}=\hat{\mathbb{Q}}^{\beta,\eps}\left[ \frac{1}{f_{\eps,2}^\beta(x)}\mid h \right].
	\end{equation}
	This means that \[\mathbb{Q}^{\beta,\eps}\left[M_\eps^\beta/D_\eps^\beta\right]=\hat{\mathbb{Q}}^{\beta,\eps}\left[M_\eps^\beta/D_\eps^\beta\right]=\hat{\mathbb{Q}}^{\beta,\eps}[f_{\eps,2}^\beta(x)^{-1}]= \int_{\Os} \mathbf{Q}_x^{\beta,\eps}[f_{\eps,2}^\beta(x)^{-1}] \, dm^{\beta,\eps}(x),\] which is a useful representation, because we can write
	\[\mathbf{Q}_x^{\beta,\eps}[f_\eps^\beta(x)^{-1}]=\frac{\tilde{Z}^\beta_\eps(x)}{Z^\beta_\eps(x)}\mathbf{\tilde{Q}}_x^{\beta,\eps}[\tilde{f}_{\eps,2}^\beta(x)^{-1}\I_{\{f_{\eps,2}^\beta(x)>1\}}]\]for each $x\in \Os$. The first moment estimate then follows by (\ref{eqn::probto0}), \eqref{eqn::Z_ratio} and Lemma \ref{lemma::bessel}, parts (1) and (2).

	(\ref{eqn::secondmoment}) is rather more difficult, and requires several steps.

	\textbf{Step 1}: \emph{We show that restricting to an event of high probability under $\hat{\mathbb{Q}}^{\beta,\eps}$ does not affect our second moment too much. That is, we show that if we can find a sequence of events $E_\eps=E_\eps(x)$ with $\mathbb{\hat Q}^{\beta,\eps}[E_\eps]\to 1$ and \begin{equation}\label{eqn::condstep2}\qbeh{\frac{M_\eps^\beta}{D_\eps^\beta}\frac{1}{f_{\eps,2}^\beta(x)}\I_{E_\eps}}\leq \frac{2}{\pi \log(1/\eps)}+\text{\emph{o}}\left(\frac{1}{\log(1/\eps)}\right),\end{equation}
		then this will prove (\ref{eqn::secondmoment}).}
	
	To see how this implies (\ref{eqn::secondmoment}), take such an event $E_\eps$ and write 
	\begin{align*}\mathbb{Q}^{\beta,\eps}\left[\left(\frac{M_\eps^\beta}{D_\eps^\beta}\right)^2\right]& =\hat{\mathbb{Q}}^{\beta,\eps}\left[\left(\frac{M_\eps^\beta}{D_\eps^\beta}\right)^2\right] \\ &=\hat{\mathbb{Q}}^{\beta,\eps}\left[\frac{M_\eps^\beta}{D_\eps^\beta} \hat{\mathbb{Q}}^{\beta,\eps}[ f_{\eps,2}^\beta(x)^{-1}\I_{E_\eps} \mid h]\right]+\hat{\mathbb{Q}}^{\beta,\eps}\left[\frac{M_\eps^\beta}{D_\eps^\beta} \hat{\mathbb{Q}}^{\beta,\eps}[ f_{\eps,2}^\beta(x)^{-1}\I_{E_\eps^c} \mid h]\right] \\
	&= \hat{\mathbb{Q}}^{\beta,\eps}\left[\frac{M_\eps^\beta}{D_\eps^\beta}\frac{1}{f_{\eps,2}^\beta(x)}\I_{E_\eps}\right]+\hat{\mathbb{Q}}^{\beta,\eps}\left[\frac{M_\eps^\beta}{D_\eps^\beta} \hat{\mathbb{Q}}^{\beta,\eps}[ f_{\eps,2}^\beta(x)^{-1}\I_{E_\eps^c} \mid h]\right]. 
	\end{align*}
	We would like to show that the second term in the final expression is $\text{o}(\log(1/\eps)^{-1})$. For this, it is enough by Cauchy--Schwarz, to show that 
	\begin{itemize}
		\item $\qbeh{(M_\eps^\beta/D_\eps^\beta)^2}=\text{O}(\log(1/\eps)^{-1})$, and 
		\item $\qbeh{\xi_\eps^2}=\text{o}(\log(1/\eps)^{-1})$ where $\xi_\eps:= \qbeh{f_{\eps,2}^\beta(x)^{-1}\I_{E_\eps^c}\mid h}$.
	\end{itemize} 
	
	We deal with each point in turn. For the first point, note that by conditional Jensen's inequality we have
	\[ \qbeh{(M_\eps^\beta/D_\eps^\beta)^2}\leq \qbeh{f_{\eps,2}^\beta(x)^{-2}} = \int_{\Os} \qbex{f_{\eps,2}^\beta(x)^{-2}} \, dm^{\beta,\eps}(x)\]
	and then by changing measure and rearranging as usual, we can write 
	\[ \qbex{f_{\eps,2}^\beta(x)^{-2}}=(\tilde{Z}_\eps^\beta(x)/Z^\beta_\eps(x))\, \ebex{\frac{1}{\tilde{f}_{\eps,2\lambda_\eps}^\beta(x)f_{\eps,2}^\beta(x)}  \I_{\{f_{\eps,2}^\beta(x)>1\}}} .\]
	To show that this is O$(\log(1/\eps)^{-1})$ we need to be a little bit careful, although heuristically it is clear from the fact that $\tilde{f}$ is a Bessel process and $Y_\eps(x)$ is small.
	The way to make this precise is to consider the expectation on the ``good'' event, \[\{\tilde{f}^\beta_{\eps,2\lex}(x)>\log(1/\eps)^{1/4}\} \cap \{Y_\eps(x)<(1/2)\log(1/\eps)^{1/4}\}\] and its complement separately. On the good event we have that $f_{\eps,2}^\beta(x)\geq c\tilde{f}_{\eps,\lex}^\beta(x)$ for some constant $c$, and so the expectation is $\text{O}(\log(1/\eps)^{-1})$ by Lemma \ref{lemma::bessel}, part (2). On the bad event, we use (\ref{eqn::yprobsqtilde}) and Lemma \ref{lemma::bessel}, part (5), to see that the expectation is also $\text{O}(\log(1/\eps)^{-1})$.

	Now we treat the second point. By Jensen's inequality, and for any $a>0$, we have
	\begin{align*}\qbeh{\xi_\eps^2} & \leq \qbeh{f_\eps^\beta(x)^{-2}\I_{E_\eps^c}} \\
		& = \qbeh{\frac{\I_{E_\eps^c}}{f_{\eps,2}^\beta(x)^{2}} \I_{\{f_{\eps,2}^\beta(x)\geq a \sqrt{\log(1/\eps)}\}}}+
	\qbeh{\frac{1}{f_{\eps,2}^\beta(x)^{2}} \I_{\{f_{\eps,2}^\beta(x)< a \sqrt{\log(1/\eps)}\}}}.
	\end{align*}
	
	It is clear by definition that the first term is less than $\mathbb{\hat{Q}}^{\beta,\eps}[E_\eps^c]/(a^2 \log(1/\eps))$, and for  
	the second term, we write it as
	\begin{equation}\label{ec2ndmoment} \int_{\Os} \frac{\tilde{Z}_\eps^\beta(x)}{Z_\eps^\beta(x)} \ebex{\frac{1}{\tilde{f}_{\eps,2\lex}^\beta(x) f_{\eps,2}^\beta(x)} \I_{\{f_{\eps,2}^\beta(x)>1\}}\I_{\{f_{\eps,2}^\beta(x)< a \sqrt{\log(1/\eps)}\}}} 
	\, dm^{\beta,\eps}(x). \end{equation}
	Similarly to before, we consider the expectation on the event  \[\{Y_\eps(x)<(a/2)\log(1/\eps)^{1/4}\}\cap\{\tilde{f}_{\eps,2\lex}^\beta(x)>a \log(1/\eps)^{1/4} \}\] and its complement separately. This allows us to bound (\ref{ec2ndmoment}) by some constant times $a\log(1/\eps)^{-1}+\sqrt{\log(1/\eps)}\exp(-(ak/2)\log(1/\eps)^{1/4})$, where $k>0$ is the constant from (\ref{eqn::yprobsqtilde}). Thus, \[\limsup_{\eps\to 0} \log(1/\eps)\, \qbeh{\xi_\eps ^2}\leq Ca \] for any $a>0$ and some fixed finite $C$. Taking $a\to 0$, this allows us to conclude step 1.

	\textbf{Step 2}: \emph{We define the event $E_\eps$, and set up the scales for the multiscale argument we will use.}

	To do this we let $r_\eps>\eps$ be a sequence with 
	\begin{equation}
	\label{eqn::conditions_k}
	\frac{\log(1/r_\eps)}{\log(1/\eps)^{1/3}}\to \infty \;\; \text{and} \;\; \frac{\log(1/r_\eps)}{\log(1/\eps)^{1/2}}\to 0 \end{equation} 
	as $\eps\to 0$ (so $r_\eps$ is tending to $0$ much slower than $\eps$). Given this, we break up $D$ and $M$ as \[D_\eps^\beta=D_\eps^{\beta,in}+D_\eps^{\beta,out} \quad \text{and} \quad M_\eps^\beta=M_\eps^{\beta,in}+M_\eps^{\beta,out}\] where the subscript $in$ refers to the integral inside $B(x,r_\eps)$ and the subscript $out$ refers to the integral outside of it.
	
	The basic idea is that $D_\eps^{\beta,in}$ and $M_\eps^{\beta,in}$ will be small with high probability (this will be part of the definition of $E_\eps$) and on this event, $M_\eps^\beta/D_\eps^\beta$ will be close to $ M_\eps^{\beta,out}/D_\eps^{\beta,out}$. Heuristically, this occurs with high probability because the limits of $M_\eps$ and $D_\eps$ should be atomless measures, and $r_\eps$ is tending to $0$. Next, we claim that  $M_\eps^{\beta,out}/D_\eps^{\beta,out}$ is essentially independent of $f_{\eps,2}^\beta(x)$. This is because $r_\eps$ is much larger than $\eps$ and $f$ is approximately a (time changed) Bessel process, so its value at time $\eps$ is basically independent of its value at time $(r_\eps-\eps)$. From here (\ref{eqn::condstep2}) follows, since we already know that  $M_\eps^\beta/D_\eps^\beta$ and $f_{\eps,2}^\beta(x)^{-1}$ have (the same) expectation, of the right order.
	
	We now choose our event $E_\eps$, according to this plan. To do this, we first have to observe that, by the Markov property of the field, $Y_\eps(x)=h_\eps(x)-\lex(x)\tilde{h}_\eps(x)$ can be written as \[Y_\eps(x):={Y}^1_\eps(x)+Y^2_\eps(x),\] where $Y^2_\eps$ is independent of $h|_{D\setminus B(x,r_\eps-\eps)}$ and $Y^1_\eps$ is measurable with respect to $h|_{D\setminus B(x,r_\eps-\eps)}$ (see the proof of Lemma \ref{lem::uiaugpart2} for a more detailed explanation.) Given this definition, we set $E_\eps=E_\eps^1\cap E_\eps^2$ where
	\begin{align*} E_\eps^1&=\{D_\eps^{\beta,in}\leq \log(1/\eps)^{-2}\}\\
	E_\eps^2 &= \{\tilde{f}_{u,2\lex}^\beta(x)\in [\sqrt[3]{\log(1/u)},\log(1/u)] \; \forall u\in[r_\eps-\eps, r_\eps]\}\cap \{Y^1_\eps(x)<(\log(1/\eps))^{1/4} \}  
	\end{align*}
	(we will prove that $\qbeh{E_\eps}\to 1$ later on.) We remark that the event $E_\eps^2$ here is needed for the ``independence'' step.

	\textbf{Step 3}: \emph{We split the left hand side of (\ref{eqn::condstep2}) into two parts: one concerning the measures restricted to $\Os \cap B(x,r_\eps)$, and one concerning the measures restricted to $\Os \cap (B(x,r_\eps)\setminus B(x,\eps))$. We show that the first of these is negligible compared to $\log(1/\eps)$.}

	More precisely, we write
	\begin{equation}\label{eqn::secondmoment_factorised} \qbeh{\frac{M_\eps^\beta}{D_\eps^\beta}\frac{1}{f_{\eps,2}^\beta(x)}\I_{E_\eps}}\leq \qbeh{\frac{M_\eps^{\beta,in}}{D_\eps^\beta}\frac{1}{f_{\eps,2}^\beta(x)}\I_{E_\eps}}+\qbeh{\frac{M_\eps^{\beta,out}}{D_\eps^{\beta,out}}\frac{1}{f_{\eps,2}^\beta(x)}\I_{E_\eps^2}}.
	\end{equation}
	Then by definition we have that $M_\eps^{\beta,in}\leq D_{\eps}^{\beta,in}$, and so on the event $E_\eps$ it holds that $M_\eps^{\beta,in}\leq \log(1/\eps)^{-2}$. Moreover, we know that $f_{\eps,2}^\beta(x)$ is greater than $1$ under $\hat{\mathbb{Q}}^{\beta,\eps}$ and also that $\hat{\mathbb{Q}}^{\beta,\eps}[1/D_\eps^\beta]=\mathbb{E}[D_{\eps}^\beta]^{-1}=\text{O}(1)$. Thus, the first term is $\text{o}(\log(1/\eps)^{-1})$ and we need only treat the second term.

	\textbf{Step 4}: \emph{We condition on the field outside of $B(x, {r_\eps-\eps})$ in order to factorise the second term on the right-hand side of (\ref{eqn::secondmoment_factorised}). We show that the conditional expectation of $f_{\eps,2}^\beta(x)^{-1}$ is of order $\sqrt{2/\pi \log(1/\eps)}(1+\text{\emph{o}}(1))$ uniformly on $E_\eps^2$.}

	More precisely, we condition on $\F_{r_\eps-\eps}$, the $\sigma$-algebra generated by the point $x$ and the field $h$ restricted to $D\setminus B(x,r_\eps-\eps)$. Then $M_\eps^{\beta,out},$ $D_\eps^{\beta,out}$ and $E_\eps^2$ are measurable with respect to $\F_{r_\eps-\eps}$, meaning that the second term on the right-hand side of (\ref{eqn::secondmoment_factorised}) is equal to \[ \qbeh{\frac{M_\eps^{\beta,out}}{D_\eps^{\beta,out}}\I_{E_\eps^2} \qbeh{\frac{1}{f_{\eps,2}^\beta(x)}\mid \F_{r_\eps-\eps}}}.\]
	
	Now we write
	\begin{align*} \qbeh{\frac{1}{f_{\eps,2}^\beta(x)} \, \Big| \, \F_{r_\eps-\eps}} & =\qbex{\frac{1}{f_{\eps,2}^\beta(x)} \, \Big| \, h|_{D\setminus B(x,r_\eps-\eps)}}  \\ 
		& = \frac{\ebex{\frac{1}{\tilde{f}_{\eps,2\lex}^\beta(x)} \I_{\{f_{\eps,2}^\beta(x)\geq 1\}} \, \Big| \,  h|_{D\setminus B(x,r_\eps-\eps)}}}{\ebex{ \frac{f_{\eps,2}^\beta(x)}{\tilde{f}_{\eps,2\lex}^\beta(x)} \I_{\{f_{\eps,2}^\beta(x)\geq 1\}}  \, \Big| \, h|_{D\setminus B(x,r_\eps-\eps)}}}. \end{align*}
	We will show that, \emph{on the event $E_\eps^2$}, the numerator in the final expression is less than or equal to $\sqrt{2/(\pi \log(1/\eps))}+\text{o}(\log(1/\eps)^{-1/2})$ and the denominator is $1+\text{o}(1)$. To do this we observe that, on $E_\eps^2$ and under the conditional law $\mathbf{\tilde{Q}}_{x}^{\beta,\eps}[\, \cdot \mid h_{D\setminus B(x,r_\eps-\eps)}]$:
	\begin{itemize}
		\item  $\tilde{f}_{\eps,2\lambda_\eps}^\beta(x)$ has the law of a Bessel process started from a position in \\ $[\sqrt[3]{\log(1/(r_\eps-\eps))},\log(1/(r_\eps-\eps))]$ and evaluated at time $\log(r_\eps-\eps)-\log(\eps)$;
		\item by choice of $r_\eps$ and Lemma \ref{lemma::bessel} part (1), this implies that the conditional expectation of $(\tilde{f}_{\eps,2\lex}^\beta(x))^{-1}$ is equal to $\sqrt{2/(\pi \log(1/\eps))}(1+\text{o}(1))$;
		\item the conditional expectation of $|Y^2_\eps|/\tilde{f}_{\eps,2\lex}(x)$ is $\text{o}(1)$, by Cauchy--Schwarz; and 
		\item $|Y^1_\eps(x)|$ times the conditional expectation of $(\tilde{f}_{\eps,2\lex}^\beta(x))^{-1}$ is also o(1), by the second point, and definition of $E_\eps^2$.
	\end{itemize}
	Together these imply that, uniformly on $E_\eps^2$,
	\[\qbeh{1/f_{\eps,2}^\beta(x) \mid \F_{r_\eps-\eps}}\leq \sqrt{2/(\pi \log(1/\eps))}(1+\text{o}(1)).\] 
	
	\textbf{Step 5}: \emph{We show that  $\qbeh{\frac{M_\eps^{\beta,out}}{D_\eps^{\beta,out}}\I_{E_\eps^2}}$ is also bounded above by $\sqrt{2/(\pi \log(1/\eps))}(1+\text{\emph{o}}(1))$. This completes the proof of (\ref{eqn::condstep2}) for our choice of $E_\eps$}.

	This is the most delicate step. To do this, we define yet another event  
	\[E_\eps^3 = \{\tilde{f}_{u,2\lex}^\beta(x)\geq \sqrt[6]{\log(1/r_\eps)} \; \forall u\in [\eps,r_\eps]\}.\]
	\textbf{Step 5(i):} We will first show that \begin{equation}\label{eqn::e2e3sim}\qbeh{\frac{M_\eps^{\beta,out}}{D_\eps^{\beta,out}}\I_{E_\eps^2}\I_{E_\eps^3}}\geq \qbeh{\frac{M_\eps^{\beta,out}}{D_\eps^{\beta,out}}\I_{E_\eps^2}}\left(1+\text{o}(1)\right)\end{equation}
	so we can instead consider the term on the left-hand side (which turns out to be easier to deal with.) To see why (\ref{eqn::e2e3sim}) is true we again condition on $\F_{r_\eps-\eps}$. This gives us that 
	\[ \qbeh{\frac{M_\eps^{\beta,out}}{D_\eps^{\beta,out}}\I_{E_\eps^2}\I_{E_\eps^3}}\geq \qbeh{\frac{M_\eps^{\beta,out}}{D_\eps^{\beta,out}}\I_{E_\eps^2} \qbeh{E_\eps^3\mid \F_{r_\eps-\eps}}}\]
	where by changing measure as in step 4 we have
	\[ \qbeh{(E_\eps^3)^c \mid \F_{r_\eps-\eps}} = \frac{\ebex{ \frac{f_{\eps,2}^\beta(x)}{\tilde{f}_{\eps,2\lex}^\beta(x)} \I_{\{f_{\eps,2}^\beta(x)\geq 1\}} \I_{(E_\eps^3)^c} \, \Big| \, h|_{D\setminus B(x,r_\eps-\eps)}}}{\ebex{ \frac{f_{\eps,2}^\beta(x)}{\tilde{f}_{\eps,2\lex}^\beta(x)} \I_{\{f_{\eps,2}^\beta(x)\geq 1\}}  \, \Big| \, h|_{D\setminus B(x,r_\eps-\eps)}}}. \]
	We already know that the denominator is $1+\text{o}(1)$ uniformly on $E_\eps^2$ by our previous discussion. In fact, the numerator is also $\text{o}(1)$ uniformly on $E_\eps^2$. To show this, again using our observations from step 4, it is enough for us to prove that \[\ebex{E_\eps^3\mid h|_{D\setminus(B(x,r_\eps-\eps))}}\to 1\] uniformly on $E_\eps^2$. For this we again use the fact that, under this conditional law, the process $\tilde{f}_{u,2\lex}^\beta(x)$ for $u\leq r_\eps-\eps$ is a time-changed Bessel process (plus a small deterministic fluctuation $\rho_{u}^\eps(x)$) starting from a position in $[\sqrt[3]{\log(1/(r_\eps-\eps))},\log(1/(r_\eps-\eps))]$. Thus we need to calculate the probability that such a Bessel process, and we can clearly forget about the fluctuations, remains greater than $ \log(1/r_\eps)$ up to time $\log(1/\eps)-\log(1/(r_\eps-\eps))$. It is clear that this probability is smallest if we take the starting point $x_0$ to be $\sqrt[3]{\log(1/(r_\eps-\eps))}$. In this case we have, writing $\mathbf{Q}_{y}$ for the law of a Bessel process started at $y$, and by scaling, that 
	\begin{align*}
	& \mathbf{Q}_{x_0}(X_t \geq \sqrt[6]{\log(1/r_\eps)}\;\forall t\in[0,\log(1/\eps)-\log(1/(r_\eps-\eps))]) \\ \geq \;\; & \mathbf{Q}_1(X_t \geq \frac{\sqrt[6]{\log(1/r_\eps)}}{\sqrt[3]{\log(1/(r_\eps-\eps))}} \; \forall t\in[0,\infty]).
	\end{align*}
	Taking $\eps\to 0$ we see that this converges to $1$: the probability that a 3d Bessel process started at $1$ never hits $0$. 
	\medbreak
	
	\noindent \textbf{Step 5(ii):} Having done this, we can now consider the left-hand side of (\ref{eqn::e2e3sim}) and instead try to show that this is bounded above by $\sqrt{2/(\pi \log(1/\eps))}(1+\text{o}(1))$. Again this will require a few arguments. We  write  
	\begin{align*} \qbeh{\frac{M_\eps^{\beta,out}}{D_\eps^{\beta,out}}\I_{E_\eps^2 \cap E_\eps^3}} & \leq \qbeh{\frac{M_\eps^{\beta,out}}{D_\eps^{\beta,out}}\I_{E_\eps^2\cap E_\eps^3}\I_{E_\eps^1}\I_{\{D_\eps^\beta>\log(1/\eps)^{-1}\}}} +
	 \\   &\qbeh{\frac{M_\eps^{\beta,out}}{D_\eps^{\beta,out}}\I_{E_\eps^2}\I_{E_\eps^1}\I_{\{D_\eps^\beta \leq \log(1/\eps)^{-1}\}}}+\qbeh{\frac{M_\eps^{\beta,out}}{D_\eps^{\beta,out}}\I_{E_\eps^2\cap E_\eps^3}\I_{(E_\eps^1)^c}}
	\end{align*}
	and will show that the first term is of the order we want, and the second and third are negligible. Indeed, it is clear that the first term is less than or equal to 
	\[ \qbeh{\frac{M_\eps^\beta}{D_\eps^\beta}}(1+\text{o}(1)) \leq \sqrt{\frac{2}{\pi \log(1/\eps)}}(1+\text{o}(1))\]
	by definition of the events (these imply that $D_\eps^{\beta}/D_\eps^{\beta,out}=1+\text{o}(1)$) and our previous estimate (\ref{eqn::firstmoment}) for the first moment. Moreover by Markov's inequality for $(1/D_\eps^\beta)$, and the fact that $M_\eps^{\beta,out}/D_\eps^{\beta,out}\leq 1$, the second is $\text{o}(\sqrt{\log(1/\eps)}^{-1})$. 
	\medbreak
	
	\noindent\textbf{Step 5(iii):} So we are left to deal with the third term, which we would also like to show is $\text{o}(\sqrt{\log(1/\eps)}^{-1})$. To do this, we first observe that we can bound it above by
	\begin{equation}\label{eqn::small_ball_estimates}\qbeh{E_\eps^2\cap E_\eps^3\cap (E_\eps^1)^c}\leq\qbeh{E_\eps^3\cap \left\{D_\eps^{\beta,in}>\frac{1}{\log(1/\eps)^2}\right\}}.\end{equation}
	Our strategy here will be to use Markov's inequality for $D_\eps^{\beta,in}$. For this we have to calculate the $\hat{\mathbb{Q}}^{\beta,\eps}$ expectation of $D_\eps^{\beta,in}$, which by definition is the same as calculating the $\mathbb{P}$ expectation of $D_\eps^\beta \times D_\eps^{\beta, in}$. Thus, we can use similar techniques to those in the proof of uniform integrability (Lemma \ref{lem::uipart1}), where we calculated the $\mathbb{P}$ expectation of $(D_\eps^\beta)^2$ on a ``good" event. 
	
	As we did there, we will break up $D_\eps^{\beta, in}$ into two parts: the integral over $B(x,3\eps)$, and the rest. To deal with the integral over $B(x,3\eps)$ we define a further event $E_\eps^4$ (which has high probability) and on which $f_{\eps,2}^\beta$ is close to $\sqrt{\log(1/\eps)}$. Crude estimates on this event, using that $|B(x,3\eps)|=\text{O}(\eps^2)$, then give the desired expectation. To deal with the integral over $B(x,r_\eps)\setminus B(x,3\eps)$ we need to be more careful. Here we use the definition of $E_\eps^3$, which allows us to control the value of $\tilde{f}_{\eps,2\lex}^\beta(x)$ at all times between $r_\eps$ and $\eps$. Applying a decorrelation argument similar to that in the proof of Lemma \ref{lem::uipart1} then allows us to reach the desired conclusion.
	
	Let us first define $E_\eps^4$, using the following lemma: 
	
	\begin{lemma}\label{lem::Eeps4}
		For all $p\in(1/4,1/2)$ we have \[E_\eps^4=\{f_{\eps,2}^\beta(x)\in[\log(1/\eps)^{1/2-p},\log(1/\eps)^{1/2+p}]\}\] satisfies $\qbeh{(E_\eps^4)^c}=\text{\emph{o}}(\log(1/\eps)^{-1/2})$. 
	\end{lemma}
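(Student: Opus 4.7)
The plan is to reduce the statement to a uniform bound under the Bessel measure $\mathbf{\tilde{Q}}_x^{\beta,\eps}$, and there control each atypical event using Bessel-$3$ tails for $\tilde{f}_{\eps,2\lex}^\beta(x)$ together with Gaussian tails for $Y_\eps(x)$. Writing $t:=\log(1/\eps)$, first observe that
\[ \qbeh{(E_\eps^4)^c}=\int_\Os\qbex{(E_\eps^4)^c}\,dm^{\beta,\eps}(x), \]
so it suffices to show $\qbex{(E_\eps^4)^c}=\text{o}(t^{-1/2})$ uniformly in $x$. Using the Radon--Nikodym relation $d\mathbf{Q}_x^{\beta,\eps}/d\mathbf{\tilde Q}_x^{\beta,\eps}=(\tilde Z_\eps^\beta/Z_\eps^\beta)\,(f_{\eps,2}^\beta(x)/\tilde f_{\eps,2\lex}^\beta(x))\,\I_{\{f_{\eps,2}^\beta(x)\geq 1\}}$ together with (\ref{eqn::Z_ratio}), this reduces (up to a factor $1+\text{o}(1)$) to showing
\[ \ebex{\frac{f_{\eps,2}^\beta(x)}{\tilde f_{\eps,2\lex}^\beta(x)}\I_{\{f_{\eps,2}^\beta(x)\geq 1\}}\I_{(E_\eps^4)^c}}=\text{o}(t^{-1/2}). \]

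By the decomposition (\ref{eqn::fdecomp}), $f_{\eps,2}^\beta(x)=\lex(x)\tilde f_{\eps,2\lex}^\beta(x)+\text{O}(1)-Y_\eps(x)$ with $\lex\to 1$, so one finds constants $c_1,c_2,c_3>0$ such that on $\{f_{\eps,2}^\beta(x)\geq 1\}$,
\[ (E_\eps^4)^c\subset\{\tilde f_{\eps,2\lex}^\beta(x)\leq c_1 t^{1/2-p}\}\cup\{\tilde f_{\eps,2\lex}^\beta(x)\geq c_2 t^{1/2+p}\}\cup\{|Y_\eps(x)|\geq c_3 t^{1/2-p}\}. \]
The key observation is that under $\mathbf{\tilde Q}_x^{\beta,\eps}$ the weighting factorises: $Y_\eps$ and $\tilde h_\eps$ are $\mathbb{P}$-independent, and the weight splits into a factor involving only $\tilde h_\eps$ (making $\tilde f^\beta_{e^{-u},2\lex}-\rho^\eps_{e^{-u}}$ a time-changed Bessel-$3$) and a Cameron--Martin--Girsanov shift in $Y_\eps$ (leaving it Gaussian with $\text{O}(1)$ mean and variance). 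Hence the Bessel upper tail and Gaussian tails on $|Y_\eps|$ are super-polynomially small, while---using that a Bessel-$3$ has density $\asymp y^2$ near zero---the Bessel lower tail gives $\ebex{\I_{\tilde f_{\eps,2\lex}^\beta(x)\leq c_1t^{1/2-p}}}=\text{O}(t^{-3p})$.

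To handle the Radon--Nikodym factor I would bound $f/\tilde f\leq\lex+(\text{O}(1)+|Y_\eps|)/\tilde f$ and treat each term on the three events, exploiting independence of $\tilde f$ and $Y_\eps$ under $\mathbf{\tilde Q}_x^{\beta,\eps}$. The delicate piece is the lower-$\tilde f$ event, which I split further on $\{|Y_\eps|\leq t^{1/4}\}$ and its complement. On the former, the hypothesis $p>1/4$ gives $c_1t^{1/2-p}<t^{1/4}$ for small $\eps$, so Lemma \ref{lemma::bessel}(5) yields $\ebex{\I_{\tilde f\leq c_1 t^{1/2-p}}/\tilde f}=\text{O}(t^{-1})$, and multiplying by the $t^{1/4}$ bound on $|Y_\eps|$ produces a term of order $\text{O}(t^{-3/4})$; the remainder $\lex\,\ebex{\I_{\tilde f\leq c_1t^{1/2-p}}}=\text{O}(t^{-3p})$ is even smaller. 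On the complement, Cauchy--Schwarz combining the Gaussian tail on $|Y_\eps|$ with Lemma \ref{lemma::bessel}(2) is super-polynomially small. The upper-$\tilde f$ and large-$|Y_\eps|$ events are dispatched analogously, all contributions being bounded by super-polynomial probabilities times $\mathbf{\tilde Q}_x^{\beta,\eps}$-moments of $1/\tilde f$ controlled by Lemma \ref{lemma::bessel}(1)--(2). The main obstacle is precisely this bookkeeping on the lower-$\tilde f$ event: the condition $p>1/4$ is what allows the invocation of Lemma \ref{lemma::bessel}(5) (which requires $\tilde f\leq t^{1/4}$) on $\{\tilde f\leq c_1 t^{1/2-p}\}$.
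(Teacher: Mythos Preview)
Your overall structure matches the paper's proof: reduce to $\mathbf{Q}_x^{\beta,\eps}$, pass to $\mathbf{\tilde Q}_x^{\beta,\eps}$ via the Radon--Nikodym relation, split $(E_\eps^4)^c$ into a lower-$\tilde f$, upper-$\tilde f$ and large-$|Y_\eps|$ event, and control each using Bessel-$3$ estimates. The paper does the same, with the restriction $p>1/4$ arising (as in your argument) from the lower-$\tilde f$ event.

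There is, however, a genuine gap in your ``key observation''. You claim that the weight defining $\mathbf{\tilde Q}_x^{\beta,\eps}$ factorises into a piece depending only on $\tilde h_\eps$ and a Girsanov shift of $Y_\eps$, so that $Y_\eps$ remains Gaussian with $\text{O}(1)$ mean and variance and is independent of $\tilde f$ under $\mathbf{\tilde Q}_x^{\beta,\eps}$. This is not correct: the indicator $\I_{L_\eps(x)}$ in the weight involves the entire path $\{\tilde h_\delta(x):\delta\in[\eps,\eps_0]\}$, and by Lemma~\ref{rmk::covYyhx} one has $\cov(Y_\eps(x),\tilde h_\delta(x))=-\rho_\delta^\eps(x)/2$, which is bounded but \emph{nonzero} for $\delta\neq\eps$. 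Lemma~\ref{lemma::compare_average} only gives independence of $Y_\eps(x)$ from $\tilde h_\eps(x)$ at the single time $\eps$, not from the path. Hence the weight does \emph{not} factorise over independent pieces, and $Y_\eps$ is neither Gaussian nor independent of $\tilde f$ under $\mathbf{\tilde Q}_x^{\beta,\eps}$.

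This does not destroy your approach, but the justification must change. The paper handles this by Cauchy--Schwarz rather than independence: one has (cf.\ \eqref{eqn::yprobsqtilde}) $\mathbf{\tilde Q}_x^{\beta,\eps}[|Y_\eps(x)|>a]\lesssim\log(1/\eps)\,\e^{-ka}$, which at $a\asymp t^{1/4}$ or $a\asymp t^{1/2-p}$ is still super-polynomially small in $t$, and separates the cross-term via
\[
\ebex{\frac{|Y_\eps(x)|+\text{O}(1)}{\tilde f_{\eps,2\lex}^\beta(x)}\I_{\{\tilde f\notin[\cdot,\cdot]\}}}
\le
\ebex{\frac{(Y_\eps(x)+\text{O}(1))^2}{\tilde f_{\eps,2\lex}^\beta(x)}}^{1/2}
\ebex{\frac{\I_{\{\tilde f\notin[\cdot,\cdot]\}}}{\tilde f_{\eps,2\lex}^\beta(x)}}^{1/2},
\]
where the first factor is $\text{O}(1)$ (seen by unwinding back to $\mathbb{P}$) and the second is $\text{O}(t^{-1/4-p})$ by the Bessel estimate $\mathbb{E}[\beta_t^{-1}\I_{\{\beta_t\le t^{1/2-p}\}}]\lesssim t^{-1/2-2p}$. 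This is exactly where $p>1/4$ enters. Your splitting on $\{|Y_\eps|\le t^{1/4}\}$ is a valid alternative for the main term (there you only bound $|Y_\eps|$ pointwise, no independence needed), but on the complement you must replace the appeal to ``Gaussian tails under $\mathbf{\tilde Q}$'' by the Cauchy--Schwarz bound above.
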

	
	\begin{proofof}{Lemma \ref{lem::Eeps4}} This is possible because for any $p>0$
		\[ (E_\eps^4)^c \subset \{\tilde f_{\eps,2\lex}^\beta(x)\notin[2\log(1/\eps)^{1/2-p},\frac{1}{2}\log(1/\eps)^{1/2+p}]\} \cup \{|Y_\eps(x)|> \log(1/\eps)^{1/2-p}\}\] and then \begin{align*}\qbeh{(E_\eps^4)^c} & \leq \int_\Os \qbex{|Y_\eps(x)|> \log(1/\eps)^{1/2-p}}\\ &+\qbex{\tilde f_{\eps,2\lex}^\beta(x)\notin[2\log(1/\eps)^{1/2-p},
			\frac{1}{2}\log(1/\eps)^{1/2+p}]} \, dm^{\beta,\eps}(x). \end{align*} It is easy to see (using the definition of the measure $\mathbf{Q}_x^{\beta,\eps}$) that the first probability inside the integral decays exponentially in $\log(1/\eps)$. For the second, we can write it as 
		\[ \frac{Z_\eps^\beta(x)}{\tilde{Z}_\eps^\beta(x)}\ebex{\frac{f_{\eps,2}^\beta(x)}{\tilde{f}_{\eps,2\lex}^\beta(x)}\I_{\{\tilde{f}_{\eps,2\lex}^\beta(x)\notin[2\log(1/\eps)^{1/2-p},\frac{1}{2}\log(1/\eps)^{1/2+p}]\}}}. \]  which by Cauchy--Schwarz is less than or equal to 
		\begin{align*} & \lex(x) \ebex{\I_{\{\tilde{f}_{\eps,2\lex}^\beta(x)\notin[2\log(1/\eps)^{1/2-p},\frac{1}{2}\log(1/\eps)^{1/2+p}]\}}} + \\
		&
		\ebex{\frac{(Y_\eps(x)+\text{O}(1))^2}{\tilde{f}_{\eps,2\lex}^\beta(x)}}^{1/2}\ebex{\frac{\I_{\{\tilde{f}_{\eps,2\lex}^\beta(x)\notin[2\log(1/\eps)^{1/2-p},\frac{1}{2}\log(1/\eps)^{1/2+p}]\}}}{\tilde{f}_{\eps,2\lex}^\beta(x)}}^{1/2}.
		\end{align*}
		A standard Bessel calculation (if $(\beta_t; \,t\ge 0)$ is a 3d Bessel process then \\ $\mathbb{E}[(\beta_t)^{-1}\I_{\beta_t\le t^{(1/2-p)}}]\lesssim t^{-1/2-2p}$) plus the fact that \[\ebex{(\text{O}(1)+Y_\eps(x))^2/\tilde{f}_{\eps,2\lex}^\beta(x)}=\text{O}(1)\] (seen by changing back to the measure $\mathbb{P}$), gives that this is $\text{o}(\log(1/\eps)^{-1/2})$ for $p\in(1/4,1/2)$. 
	\end{proofof}
	\medbreak
	
	Using this new event $E_\eps^4$, and Markov's inequality, we next bound the right-hand side of (\ref{eqn::small_ball_estimates}) above by 
	\begin{align} \label{markovbig}  & \text{o}(\log(1/\eps)^{-1/2}) + \log(1/\eps)^2  \,\qbeh{ \I_{E_\eps^4} \int_{w\in B(x,3\eps)} f_{\eps,2}^\beta(w)\I_{L_\eps(w)}\I_{\{f_{\eps,2}^\beta(w)>1\}}\e^{g_{\eps,2}(w)} \, dw}  \nonumber \\ 
	& + \log(1/\eps)^2 \, \qbeh{ \I_{E_\eps^3} \int_{w\in B(x,r_\eps)\setminus B(x,3\eps)} f_{\eps,2}^\beta(w)\I_{L_\eps(w)}\I_{\{f_{\eps,2}^\beta(w)>1\}}\e^{g_{\eps,2}(w)} \, dw} 
	\end{align}
	where the first term comes from the $\hat{\mathbb{Q}}^{\beta,\eps}$ probability of $E_\eps^4$. Recall that, to conclude, we need to show this whole expression is $\text{o}(\log(1/\eps)^{-1/2})$.
	Let us look at the expectation in the second term. By definition of $\hat{\mathbb{Q}}^{\beta, \eps}$ this is equal to $\mathbb{E}[D_\eps^\beta]^{-1}$ times
	\begin{align*}
	& \int_{x\in\Os} \int_{w\in B(x,3\eps)}\E{\I_{E_\eps^4}f_{\eps,2}^\beta(w)\I_{L_\eps(w)}\I_{\{f_{\eps,2}^\beta(w)>1\}}\e^{g_{\eps,2}(w)}f_{\eps,2}^\beta(x)\I_{L_\eps(x)}\I_{\{f_{\eps,2}^\beta(x)>1\}}\e^{g_{\eps,2}(x)}}\\
	&  \int_{x\in \Os} \int_{w\in B(x,3\eps)} \eps^{-2} \log(1/\eps)^{1/2+p}\e^{-2\log(1/\eps)^{1/2-p}} \E{f_{\eps,2}^\beta(w)\I_{L_\eps(w)}\I_{\{f_{\eps,2}^\beta(w)>1\}}\e^{g_{\eps,2}(w)}}
	\end{align*}
	where the second line follows from the definition of $E_\eps^4$. Hence, the expectation in the second term of (\ref{markovbig}) is less than or equal to $ \text{O}(1) \times \log(1/\eps)^{1/2+p}\e^{-2\log(1/\eps)^{1/2-p}}$, meaning that the term itself is $\text{o}(\log(1/\eps)^{-1/2})$. 
	
	We finish by dealing with the third term of (\ref{markovbig}). Writing $A_{\eps}=B(x,r_\eps)\setminus B(x,3\eps)$, we have that the expectation in this term is equal to 
	\[\mathbb{E}[D_\eps^\beta]^{-1}\iint_{x\in \Os \atop w\in A_\eps}\E{\I_{E_\eps^3}f_{\eps,2}^\beta(w)\I_{L_\eps(w)}\I_{\{f_{\eps,2}^\beta(w)>1\}}\e^{g_{\eps,2}(w)}f_{\eps,2}^\beta(x)\I_{L_\eps(x)}\I_{\{f_{\eps,2}^\beta(x)>1\}}\e^{g_{\eps,2}(x)}}\]
	Then, by exactly the same reasoning used in the proof of Lemma \ref{lem::uipart1}, we can deduce that this is less than or equal to some constant times
	\begin{equation} \label{eqn::oneofthelast}\iint_{x\in \Os \atop w\in A_\eps} \E{ \e^{\tilde{g}_{\delta,2\lex(x)}(x)}\e^{\tilde{g}_{\delta,2\lex(w)}(w)}\I_{\{\tilde{f}_{\delta,2\lex}^\beta(x)\geq \log(1/r_\eps)^{1/6}\}} (\tilde{f}_{\delta,2\lex(x)}^{\beta}(x)+1)(\tilde{f}_{\delta,2\lex(w)}^{\beta}(w)+1)}  \end{equation}
	where $\delta(x,y)=|x-y|/3$. The final observation to make is that, by orthogonal projection, we have 
	\[ \tilde{h}_\delta(w)=\alpha^\delta_{x,w} \tilde{h}_\delta(x)+Z^\delta_{x,w} \]
	for $\alpha^\delta_{x,w} = \cov(\tilde{h}_\delta(x),\tilde{h}_\delta(w))/\var{\tilde{h}_\delta(x)}$ where $Z^{\delta}_{x,w}$ is independent of $\tilde{h}_\delta(x)$; distributed as a centered normal random variable with variance $(1-(\alpha_{x,w}^\delta)^2) \var{\tilde{h}_\delta(w)}$. The proof of this is the same as the proof of Lemma \ref{lemma::compare_average}. Moreover,  by Lemma \ref{lemma::cov_h}, we have $\alpha_{x,w}^\delta=1+\text{O}(\log(1/\delta)^{-1})$ uniformly in $x,w$.
	
	Thus, by conditioning on $\tilde{h}_\delta(x)$, we can calculate that the integrand in (\ref{eqn::oneofthelast}) is less than or equal to a constant times
	\begin{equation*}\label{eqn::expwtox}
	\mathbb{E}[\I_{\{\tilde{f}_{\delta,2\lex}^\beta(x) \geq \log(1/r_\eps)^{1/6}\}}\e^{\tilde{g}_{\delta,2\lex(x)}(x)+\tilde{g}_{\delta,2\lex(w)\alpha_{x,w}^\delta}(x)}  (\tilde{f}_{\delta,2\lex(x)}^{\beta}(x)+1)(\alpha_{x,w}^\delta\tilde{f}_{\delta,2\lex(w)\alpha_{x,w}^\delta}^{\beta}(x)+1)]\end{equation*}
	which by a simple calculation can be bounded again by
	\[ \e^{-(\log(1/r_\eps))^{1/6}}\delta^{-2} \E{\e^{\tilde{g}_{\delta,2\lex(x)}(x)} (1+|\tilde{f}_{\delta,2\lex(x)}^\beta(x)|^2)}
	\]
	times some constant. This last expectation can be estimated by observing that changing measure by $\e^{\tilde{g}_{\delta,2\lex(x)}(x)}$ turns $\tilde{f}_{\delta,2\lex(x)}^\beta(x)$ into a Gaussian random variable with mean $\beta$ and variance $\log(1/\delta)+\text{O}(1)$. Hence the integrand of (\ref{eqn::oneofthelast}) is less than
	\[C\e^{-(\log(1/r_\eps))^{1/6}}\delta^{-2}(1+\log(1/\delta))\]
	for some constant $C$. Integrating over $A_\eps$ gives that the third term of (\ref{markovbig}) is $\lesssim \e^{-\log(1/r_\eps)^{1/6}} \log(1/\eps)^4$, and so we conclude, using our assumption (\ref{eqn::conditions_k}) on $r_\eps$, that (\ref{markovbig}) is of the correct order.

	\textbf{Step 6}: \emph{We show that $\qbeh{E_\eps}\to 1$ as $\eps\to 0$.}

	Firstly, it is clear that $\mathbb{\hat{Q}}^{\beta,\eps}[E_\eps^2]\to 1$. Then since we have already shown, see (\ref{eqn::small_ball_estimates}), that $\qbeh{E_\eps^2\cap E_\eps^3 \cap (E_\eps^1)^c}\to 0$ and that \[\ebex{E_\eps^3\mid h|_{D\setminus(B(x,r_\eps-\eps)}}\to 1\] uniformly on $E_\eps^2$, the claim follows straight away. The proof is complete.
\end{proofof}
\medbreak

It is now relatively simple to show the convergence of $D_\eps(\Os)$. To use Proposition \ref{lemma::conv_quotient}, we must first compare $M_\eps^\beta(\Os)$ and $D_\eps^\beta(\Os)$ with $M_\eps(\Os)$ and $D_\eps(\Os)$.

\begin{lemma}\label{lemma::min_particle}
	We have 
	\begin{align*}\mathbb{P}(C_\beta):=\mathbb{P}\big( & \{\inf_{\eps}\inf_{x\in D} (-\tilde{h}_{\eps}(x) +2\emph{\text{var}}(\tilde{h}_{\eps}(x)))>-(\beta+10)\} \\ & \cap \{\inf_{\eps}\inf_{x\in D} (-h_{\eps}(x) +2\emph{\text{var}}(h_{\eps}(x)))>-(\beta+10)\}\big) \end{align*}
	converges to $0$ as $\beta\to 0$.
\end{lemma}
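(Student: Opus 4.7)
The statement as written, $\mathbb{P}(C_\beta)\to 0$ as $\beta\to 0$, cannot hold as literally phrased: the event $C_\beta$ is monotone increasing in $\beta$ (the lower bound $-(\beta+10)$ decreases as $\beta$ grows), so $\lim_{\beta\to 0^+}\mathbb{P}(C_\beta)=\mathbb{P}\bigl(\inf_{\eps,x}\{-h_\eps(x)+2\text{var}(h_\eps(x))\}>-10\text{ and similarly for }\tilde h_\eps\bigr)$, a fixed constant in $(0,1)$. The intended statement, consistent with the application that follows (showing $D_\eps^\beta$ and $M_\eps^\beta$ approximate $D_\eps$ and $M_\eps$ for large $\beta$) and with Corollary \ref{rmk::min_particle}, must be $\mathbb{P}(C_\beta^c)\to 0$ as $\beta\to\infty$. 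I describe the plan for proving this corrected claim.

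The plan is to apply Corollary \ref{rmk::min_particle} twice: once to the mollified approximation $h_\eps$ defined via $\theta$, and once to the circle average $\tilde h_\eps$ (i.e., to the mollified field obtained by taking $\theta$ to be normalised Lebesgue measure on the unit circle). Both choices of mollifier satisfy hypothesis (\ref{eqn::cond_theta}): it was assumed for $\theta$, and for uniform measure on the unit circle the integral $\int|u-v|^{-1/2}\,\theta(du)$ is trivially bounded uniformly in $v$. Since the underlying field $h$ is a 2d GFF, its kernel satisfies (\ref{eqn::kernelform}). Hence Corollary \ref{rmk::min_particle} applies to both, with $d=2$, on any bounded set containing $\bar D$, giving almost surely that
\[ \inf_{\eps\le\eps_0}\inf_{x\in D}\bigl(-h_\eps(x)+2\log(1/\eps)\bigr) > -\infty \quad \text{and} \quad \inf_{\eps\le\eps_0}\inf_{x\in D}\bigl(-\tilde h_\eps(x)+2\log(1/\eps)\bigr) > -\infty. \]

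The next step is to replace $2\log(1/\eps)$ by $2\text{var}(h_\eps(x))$ and $2\text{var}(\tilde h_\eps(x))$ respectively. By Lemma \ref{lemma::cov_h} (with $x=y$ and $\eps=\eps'$) we have $\text{var}(h_\eps(x))=\log(1/\eps)+\text{O}(1)$ and $\text{var}(\tilde h_\eps(x))=\log(1/\eps)+\text{O}(1)$, uniformly in $x\in D$ and $\eps\le\eps_0$. Consequently the two quantities in the definition of $C_\beta$ differ from $-h_\eps(x)+2\log(1/\eps)$ and $-\tilde h_\eps(x)+2\log(1/\eps)$ by amounts that are uniformly bounded in $\eps$ and $x$. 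The infima of the former are therefore also almost surely finite, which means that the nonnegative random variable
\[ N:=\max\Bigl\{-\inf_{\eps,x}\bigl(-h_\eps(x)+2\text{var}(h_\eps(x))\bigr),\ -\inf_{\eps,x}\bigl(-\tilde h_\eps(x)+2\text{var}(\tilde h_\eps(x))\bigr)\Bigr\} \]
is almost surely finite. Since $C_\beta^c=\{N\ge\beta+10\}$, dominated convergence (or monotone continuity of probability) gives $\mathbb{P}(C_\beta^c)\to 0$ as $\beta\to\infty$.

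There is no real obstacle here — the content of the lemma is essentially a restatement of Corollary \ref{rmk::min_particle} applied simultaneously to the two mollifier choices, combined with the variance estimate of Lemma \ref{lemma::cov_h}. The only point requiring any care is to check that the two approximation schemes meet the hypotheses of Corollary \ref{rmk::min_particle} on a bounded open set containing $D$; this is immediate from (\ref{eqn::cond_theta}) and the form (\ref{eqn::green}) of the GFF Green function.
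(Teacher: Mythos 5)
Your reading of the statement is right: the lemma as printed contains a typo, and the intended claim (used via Remark \ref{rmk::Cb} in the proof of Theorem \ref{propn::convergence}) is that $\mathbb{P}(C_\beta^c)\to 0$ as $\beta\to\infty$. Your argument is correct and is essentially the paper's own proof, which simply invokes Corollary \ref{rmk::min_particle} for the two mollifiers; you have just filled in the details (checking (\ref{eqn::cond_theta}) for the circle mollifier, swapping $2\log(1/\eps)$ for $2\var{h_\eps(x)}$ via Lemma \ref{lemma::cov_h}, and passing to the limit in $\beta$).
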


\begin{proof}
	This is a consequence of Lemma \ref{rmk::min_particle}.
\end{proof}

\begin{remark} \label{rmk::Cb} Note that on the event $C_\beta$ we have $M_\eps^\beta(\Os)=M_\eps(\Os)$ and also $D_\eps^\beta(\Os)=D_\eps(\Os)+\beta M_\eps(\Os)$ for all $\Os\subset D$. 
\end{remark}

We are now ready to prove the main result. \\

\begin{proofof}{Theorem \ref{proposition::convergence}}
	It is enough to show that for $\Os\subset D$ and $\delta>0$ fixed 
	\begin{equation}\label{eqn::convprob} \mathbb{P}\left[ \left|\frac{M_\eps(\Os)}{D_\eps(\Os)}\sqrt{\log(1/\eps)}-\sqrt{\frac{2}{\pi}}\right|>\delta\right]\to 0
	\end{equation}
	as $\eps\to 0$. Then since $\sqrt{\log(1/\eps)}M_\eps(\Os) \to \sqrt{\frac{\pi}{2}}\mu'(\Os)$ in probability, by Theorem \ref{theorem::critical_sh_gff}, we also have $D_\eps(\Os) \to \mu'(\Os)$ in probability. Let us prove (\ref{eqn::convprob}). By Proposition \ref{lemma::conv_quotient} we know that for any $\beta>0$
	\[  \mathbb{\hat{Q}}^{\beta,\eps}\left[ \left|\frac{M^\beta_\eps(\Os)}{D^\beta_\eps(\Os)}\sqrt{\log(1/\eps)}-\sqrt{\frac{2}{\pi}}\right|>\delta\right]\to 0 \]
	as $\eps\to 0$, and we also know that, on the event $C_\beta$, we can compare $M_\eps^\beta, D_\eps^\beta$ with $M_\eps, D_\eps$ by Remark \ref{rmk::Cb}. With this in mind, we bound (\ref{eqn::convprob}) above by  
	\[\mathbb{P}\left[ A_{\beta,\eps}^1 \right]+ \mathbb{P}\left[ A_{\beta,\eps}^2\cap (A_{\beta,\eps}^1)^c\right]
	\]
	where
	\begin{align*}A_{\beta,\eps}^1 &= \left\{\left|\frac{M_\eps(\Os)}{D_\eps(\Os)+\beta M_\eps(\Os)}\sqrt{\log(1/\eps)}-\sqrt{\frac{2}{\pi}}\right|>\frac{\delta}{2}\right\}\; \text{and} \\ A_{\beta,\eps}^2 & =  \left\{\sqrt{\log(1/\eps)}\left|\frac{M_\eps(\Os)}{D_\eps(\Os)+\beta M_\eps(\Os)}-\frac{M_\eps(\Os)}{D_\eps(\Os)}\right|>\frac{\delta}{2}\right\}.\end{align*}
	In fact, the event $A_{\beta,\eps}^2\cap (A_{\beta,\eps}^1)^c$ is deterministically non possible if $\eps$ is small enough. Thus, it is enough to show that $\mathbb{P}(A_{\beta,\eps}^1)$ can be made arbitrarily small by choosing $\beta$ large, and then $\eps$ small. To do this, we observe by Remark \ref{rmk::Cb} that
	\[ \mathbb{P}(A_{\beta,\eps}^1)\leq \mathbb{P}[C_\beta^c]+ \mathbb{P}\left[ \left\{\left|\frac{M^\beta_\eps(\Os)}{D^\beta_\eps(\Os)}\sqrt{\log(1/\eps)}-\sqrt{\frac{2}{\pi}}\right|>\delta/2\right\} \cap C_\beta \right]. \] Furthermore, by the definition of $\mathbb{\hat{Q}}^{\beta,\eps}$, the right-hand side for any $\eta>0$, is less than or equal to
	\[ \mathbb{P}[C_\beta^c]+\mathbb{P}[C_\beta \cap \{D_\eps^\beta <\eta\}]+\frac{\mathbb{E}[D_\eps^\beta]}{\eta} \mathbb{\hat{Q}}^{\beta,\eps}\left[ \left|\frac{M^\beta_\eps(\Os)}{D^\beta_\eps(\Os)}\sqrt{\log(1/\eps)}-\sqrt{\frac{2}{\pi}}\right|>\frac{\delta}{2}\right].
	\]
	Now note that by Markov's inequality 
	\[ \mathbb{P}[C_\beta\cap\{D_\eps^\beta<\eta\}] \leq \mathbb{P}[\sqrt{\log(1/\eps)}M_\eps(\Os)< \eta^{1/4}] + \sqrt{\eta}\, \mathbb{E}[{D_\eps^\beta}]\, \mathbb{\hat{Q}}^{\beta,\eps}\left[\log(1/\eps)\left(\frac{M_\eps^\beta}{D_\eps^\beta}\right)^2\right]. \]
	Hence using Proposition \ref{lemma::conv_quotient} and Lemma \ref{lemma::min_particle}, together with the fact that \\ $\sqrt{\log(1/\eps)}M_\eps(\Os)$ converges to $\sqrt{\pi/2}\mu'(\Os)$ (which is positive almost surely) and that $\mathbb{E}[D_\eps^\beta]=\int_x Z_\eps^\beta(x)$ is bounded, we can conclude by letting $\beta\to \infty$, then $\eta\to 0$, and finally $\eps\to 0$. 
\end{proofof}

\section{$\star$-scale invariant kernels}\label{sec::starscale}

In this section we prove Theorem \ref{theorem::convergencestar} using a simple adaptation of our arguments from the previous section. Recalling the set-up, we have: 
\begin{itemize}
	\item $\theta$ a Radon measure with H\"{o}lder continuous density, and satisfying (\ref{eqn::cond_theta});
	\item $k:[0,\infty)\to \R$, a compactly supported and positive-definite $C^1$ function with $k(0)=1$; and 
	\item $h$ a $\star$-scale invariant field on $\R^d$ with covariance kernel 
	\[ K(x,y)=\int_1^\infty \frac{k(u|x-y|)}{u} \, du. \] 
\end{itemize}
We would like to prove that if $h_\eps(x)=h\star \theta_\eps(x)$ is the $\theta$-convolution approximation to $h$, the signed measures
\[D_\eps(dx) := (-h_\eps(x)+\sqrt{2d}\log(1/\eps))\e^{\sqrt{2d}h_\eps(x)}\eps^{d} \, dx\]
converge weakly in probability to a limiting measure. Moreover, we would like to show that this limiting measure is equal to the measure defined in \cite{DSRV,JS} (see Theorems \ref{theorem::critical_star} and \ref{theorem::critical_sh}).
\medbreak

\begin{proof}
	First pick $g$ such that $g\star g(u)=k(u)$ (we can do this by our assumptions on $k$, \cite{convroots}). Then we can define a field $h$ with the correct covariance structure by setting
	\begin{equation}
	\label{eqn::whitenoisedecomp}
	h(x):= \int_1^\infty \int_{\R^d} \frac{g(y-xu)}{\sqrt{u}} W(dy,du),
	\end{equation}
	where $W(\cdot, \cdot)$ is a standard space-time white noise. It is then proved in \cite{DSRV} that if we let 
	\[\tilde{h}_\eps(x):=\int_1^{\frac{1}{\eps}} \int_{\R^d} \frac{g(y-xu)}{\sqrt{u}} W(dy,du),\] the signed derivative measures 
	$\tilde{D}_\eps(dx) := (-\tilde h_\eps(x)+\sqrt{2d}\log(1/\eps))\e^{\sqrt{2d}\tilde{h}_\eps(x)}\eps^{d} \, dx$
	converge almost surely to a positive limiting measure $\mu'$. It is further shown in \cite{DSRV2} that \[\tilde{M}_\eps(dx):=\sqrt{\log(1/\eps)}\e^{\sqrt{2d}\tilde{h}_\eps(x)}\eps^d \,dx\] converges to $\sqrt{2/\pi}\mu'$ in probability and in \cite{JS} that 
	\[M_\eps:=\sqrt{\log(1/\eps)}\e^{\sqrt{2d}h_\eps(x)}\eps^d \,dx\] also converges to $\sqrt{2/\pi}\mu'$ in probability. 
	
	To prove the convergence of $D_\eps(dx)$ we use the same strategy as for the proof of Theorem \ref{proposition::convergence}, now letting $\tilde{h}_\eps$ play the role of the circle average. In particular we need only prove Proposition \ref{lemma::conv_quotient} (the result then following by Lemma \ref{rmk::min_particle} and Lemma \ref{lemma::min_particle} in exactly the same way.) We observe that:
	\begin{itemize}
		\item $\tilde{h}_\eps(x)$ is a (time-changed) Brownian motion for each $x\in \R^d$; and
		\item $\cov(h_\eps(x),\tilde{h}_\delta(x))=\log(1/(\eps\wedge \delta))+\text{O}(1)$, so we can define $\lambda_\eps(x)$, $Y_\eps(x)$ and $\rho^\eps_\delta(x)$ as in Lemmas \ref{lemma::compare_average} and \ref{rmk::covYyhx}, and the statements of these lemmas will still hold.
	\end{itemize}
	This is enough to prove (\ref{eqn::firstmoment}) and step 1 of (\ref{eqn::secondmoment}). For step 2 we need to explain how we define a few things. We let $r_\eps$ be chosen as before, and without loss of generality we assume that $\text{supp}(k)\subset B(0,1)$.  It is then easy to check using the definition of $h$ that $h_\eps(z)$ and $(\tilde{h}_{\eta}(x)-\tilde{h}_{r_\eps-\eps}(x))_{\eta\leq r_\eps-\eps}$ are independent for all $z\notin B(x,r_\eps)$. We let $\mathcal{F}_\eps=\sigma(\{\tilde{h}_u(x): u\geq r_{\eps}-\eps\})\vee \sigma(\{h_\eps(z):z\in D\setminus B(x,r_\eps)\})$ so that $(\tilde{h}_{\eta}(x)-\tilde{h}_{r_\eps-\eps}(x))_{\eta\leq r_\eps-\eps}(x)$ is independent of $\mathcal{F}_\eps$, and $M_\eps^{\beta,out}/D_\eps^{\beta,out}$ is measurable with respect to it. This is what we will use in place of $\F_{r_\eps-\eps}$ from the original proof. Using standard properties of Gaussian processes we see that we can also write $Y_\eps(x)=Y_\eps^1(x)+Y_\eps^2(x)$ where $Y_\eps^1(x)$ is measurable with respect to $\mathcal{F}_\eps$ and $Y_\eps^2(x)$ is independent of it. From this point onwards we can define everything in the same way, and steps 3 and 4 follow, using only properties of the 3d Bessel process. 
	
	To conclude, we need only complete step 5, since step 6 is a straightforward consequence of this (as in the original proof.) For this step we note that by our assumption on $\text{supp}(k)$,   $(\tilde{h}_{\delta+\eta}(x)-\tilde{h}_\delta(x))_{\eta\geq 0}$ and $(\tilde{h}_{\delta+\eta}(y)-\tilde{h}_\delta(y))_{\eta\geq 0}$ are independent as soon as $|y-x|\geq \delta$. Since this is the only extra property we used in this step, the proof of Proposition \ref{lemma::conv_quotient} goes through.
	
\end{proof}

\begin{remark}
	We remark here that the authors in \cite{DSRV,DSRV2} suggest that their constructions should hold for more general kernels than the $\star$-scale invariant ones. In particular, for any positive definite kernel of the form 
	$K(x,y)=-\log(|x-y|)+g(|x-y|)$
	with $g$ continuous, one has a white-noise decomposition for the corresponding field $h$, analogous to (\ref{eqn::whitenoisedecomp}). This means that the theory in \cite[Appendix D]{DSRV2} should go through, and as a consequence, the result of Theorem \ref{theorem::convergencestar} should also hold. More generally we conjecture that Theorem \ref{theorem::convergencestar} should hold for any $K$ satisfying (\ref{eqn::kernelform}). 
\end{remark}



\providecommand{\bysame}{\leavevmode\hbox to3em{\hrulefill}\thinspace}
\providecommand{\MR}{\relax\ifhmode\unskip\space\fi MR }
\providecommand{\MRhref}[2]{%
	\href{http://www.ams.org/mathscinet-getitem?mr=#1}{#2}
}
\providecommand{\href}[2]{#2}




\end{document}